\newtheorem{theorem}{Theorem}[section]
\newtheorem{proposition}[theorem]{Proposition}
\newtheorem{lemma}[theorem]{Lemma}
\theoremstyle{definition}
\newtheorem{question}{Question}[section]
\newtheorem{conjecture}{Conjecture}[section]
\newtheorem{definition}{Definition}[section]
\DeclareMathOperator{\dist}{dist}
\DeclareMathOperator*{\coal}{\circ}
\title{Coalescing sets preserving cospectrality of graphs\\ arising from block similarity matrices}
\author{
Sajid Bin Mahamud\footnote{Reed College, Portland, OR 97202, USA, \texttt{sajidbmahamud@reed.edu}} \and
Steve Butler\footnote{Iowa State University, Ames, IA 50011, USA, \texttt{\{butler,nlayman\}@iastate.edu}}\and 
Hannah Graff\footnote{Creighton University, Omaha, NE 68178, USA, \texttt{hannahgraff2@gmail.com}}\and
Nick Layman\footnotemark[2]\and 
Taylor Luck\footnote{Xavier University, Cincinnati, OH 45207, USA, \texttt{taylorluck108@gmail.com}}\and
Jiah Jin\footnote{Cooper Union, New York, NY 10003, USA \texttt{jiah.jin@cooper.edu}}\and
Noah Owen\footnote{University of Kentucky, Lexington, KY 40506, USA, \texttt{naow224@uky.edu}} \and
Angela Yuan\footnote{University of Texas at Austin, Austin, TX 78712, USA \texttt{angelayuan@utexas.edu}}
}
\date{\empty}
\begin{document}

\maketitle

\begin{abstract}
Coalescing involves gluing one or more rooted graphs onto another graph. Under specific conditions, it is possible to start with cospectral graphs that are coalesced in similar ways that will result in new cospectral graphs. We present a sufficient condition for this based on the block structure of similarity matrices, possibly with additional constraints depending on which type of matrix is being considered. The matrices considered in this paper include the adjacency, Laplacian, signless Laplacian, distance, and generalized distance matrix.
\end{abstract}

\section{Introduction}\label{sec:intro}
A famous problem in spectral graph theory is whether it is possible to ``hear the shape of a graph,'' meaning whether a graph can be uniquely identified by its set of eigenvalues associated with a matrix. There are many cases of pairs of non-isomorphic graphs that have the same set of eigenvalues. Such graphs are called cospectral, and so the answer in general is ``no'', though whether you can hear the shape of a graph for almost all graphs is still an open problem (see \cite{MR2070541,MR2022290,MR2499010}).

A common method for constructing large cospectral graphs is to start with small cospectral graphs and augment them in some way. One popular method is known as coalescing, which can be thought of as gluing graphs together at specified vertices (see Section~\ref{sec:coalescing}). This was used by Schwenk \cite{Schwenk} to establish that almost all trees have a cospectral mate for the adjacency matrix, which was later extended by McKay \cite{McKay} to the Laplacian, signless Laplacian, and distance matrices. Coalescing has also been used for other constructions, e.g.\ Heysse \cite{Heysse} used coalescing to form cospectral pairs for the distance matrix with differing numbers of edges.

While coalescing is well-understood for certain matrix families (e.g.\ adjacency, Laplacian, and signless Laplacian; see \cite{REU22,guo}) which can use sparseness to give combinatorial methods to compute characteristic polynomials, it is poorly understood for other matrix families which are dense (e.g.\ distance). While coalescing has been previously used on the distance matrix, in those cases the arguments relied heavily on special graph structures for the characteristic polynomial (see McKay \cite{McKay}) or for eigenvector arguments (see Heysse \cite{Heysse}). 

The goal of this paper is to establish sufficient conditions for constructing cospectral graphs by use of coalescing in terms of the structure of the similarity matrices of the base graphs used. We will establish results for the adjacency, Laplacian, signless Laplacian, distance, and generalized distance matrices. In particular, this paper marks significant progress in understanding coalescing for distance matrices and the construction of cospectral graphs for distance matrices (see \cite{distancesurvey,Aouchiche,Survey2}). 

We also answer ``no'' to the following conjecture  related to the distance matrix which was the original motivation for the research resulting in this paper.
\begin{conjecture}[{Butler et al.\ \cite{REU22}}]
Let $G_1$ and $G_2$ be two graphs with $B_1\subseteq V(G_1)$ and $B_2\subseteq V(G_2)$ such that coalescing the same (connected) rooted graph onto all the vertices of $B_1$ and $B_2$ \emph{always} results in cospectral pairs for the distance matrix. Then, coalescing the same (connected) rooted graph onto all the vertices of $V\setminus B_1$ and $V\setminus B_2$ will also always result in cospectral pairs for the distance matrix.
\end{conjecture}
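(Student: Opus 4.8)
The plan is to turn the hypothesis into an algebraic intertwining condition on $D_{G_1},D_{G_2}$, and then argue that this condition is visibly symmetric under swapping each $B_i$ for its complement; the whole difficulty is concentrated in verifying that symmetry.

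\textbf{Step 1 (a normal form for coalescing onto a whole set).} Fix a connected rooted graph $H$ with root $r$, let $W$ be its $k$ non-root vertices with distance matrix $D_H$, and let $\mathbf s\in\mathbb R^k$ record the root-distances $\mathbf s_w=\dist_H(w,r)$. Coalescing a copy of $H$ onto every vertex of $B\subseteq V(G)$ leaves all distances inside $V(G)=\{1,\dots,n\}$ unchanged, and in the block order $\bigl(V(G);\,\bigsqcup_{v\in B}\{v\}\times W\bigr)$ produces the distance matrix
\[
\widetilde D=
\begin{pmatrix}
D_G & (D_GP_B)\otimes\mathbf 1_k^{\top}+\mathbf 1_n(\mathbf 1_b\otimes\mathbf s)^{\top}\\[2pt]
\ast & I_b\otimes D_H+(J_b-I_b)\otimes(\mathbf s\mathbf 1_k^{\top}+\mathbf 1_k\mathbf s^{\top})+D_G[B,B]\otimes J_k
\end{pmatrix},
\]
where $P_B$ selects the $B$-columns, $D_G[B,B]$ is the principal submatrix on $B$, and $\ast$ is the transpose of the displayed block. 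The Kronecker structure shows that any vector supported on the copies and orthogonal, in each copy, to both $\mathbf 1_k$ and $\mathbf s$ is an eigenvector seeing only $D_H$ restricted to $\{\mathbf 1_k,\mathbf s\}^{\perp}$; these contribute a characteristic-polynomial factor common to $G_1,G_2$ exactly once $b:=|B_1|=|B_2|$ (a necessary condition, forced by matching their multiplicities). Quotienting them out leaves a reduced $(n+2b)\times(n+2b)$ matrix $M_B(G;k,\mu)$ whose entries are polynomials in $\lambda$ and in the finite list of \emph{attachment moments} $\mu=(\mathbf 1^{\top}\mathbf s,\ \mathbf s^{\top}\mathbf s,\ \mathbf 1^{\top}D_H\mathbf 1,\ \mathbf 1^{\top}D_H\mathbf s,\ \mathbf s^{\top}D_H\mathbf s)$.

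\textbf{Step 2 (the hypothesis as an intertwiner).} The hypothesis says $\det(M_{B_1}(G_1;k,\mu)-\lambda I)=\det(M_{B_2}(G_2;k,\mu)-\lambda I)$ for every realizable $(k,\mu)$. Because realizable tuples are dense enough (lengthening paths, attaching cliques, etc.\ moves each moment essentially independently), I would upgrade this to an identity in the formal variables $(\lambda,k,\mu)$, and then feed it into the paper's block-similarity machinery to produce an invertible $S$ with $SD_{G_1}=D_{G_2}S$ that is \emph{block-diagonal} for the partition $(B,B^{c})$, say $S=\operatorname{diag}(S_B,S_{B^{c}})$, and that additionally carries the attachment data of $B_1$ to that of $B_2$ (the columns $D_{G_1}P_{B_1}$ and the restriction of $\mathbf 1$ to $B_1$). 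If the hypothesis were \emph{exactly} equivalent to such a block-diagonal $S$ together with a side condition symmetric in the two diagonal blocks, the conjecture would follow at once: reading $\operatorname{diag}(S_B,S_{B^{c}})$ with its blocks interchanged yields an intertwiner adapted to $(B^{c},B)$, which by Step 1 is precisely what makes coalescing onto $V\setminus B_1$ and $V\setminus B_2$ cospectral for every $H$.

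The main obstacle is the italicized clause ``side condition symmetric in the two diagonal blocks.'' The distance matrix is dense: in $M_B$ the set $B$ couples to $B^{c}$ through the off-diagonal block $D_G[B,B^{c}]$ and through the all-ones vector $\mathbf 1_n$ entering via the root-distance term $\mathbf 1_n(\mathbf 1_b\otimes\mathbf s)^{\top}$. The moments of Step 2 probe $D_H$ only along $\mathbf 1_k$ and $\mathbf s$ \emph{placed on the $B$-copies}, so the extracted constraints pin down how $D_{G_1}$ and $D_{G_2}$ are related as seen from $B$—on $\operatorname{col}(D_GP_B)+\operatorname{span}\{\mathbf 1\}$—while leaving the relationship as seen from $B^{c}$ underdetermined. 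The crux I must establish is therefore that the $B$-side intertwining \emph{forces} the $B^{c}$-side intertwining across the asymmetric block $D_G[B,B^{c}]$; I expect this implication to be the hard, and possibly irreparable, step, since no algebraic mechanism symmetrizes the two sides. Isolating the minimal configuration in which the $B$-condition holds but the forced $B^{c}$-condition does not is exactly the point at which this programme would have to succeed or fail.
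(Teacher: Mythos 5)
There is a fundamental problem before any technical gap: the statement you are trying to prove is \emph{false}, and the paper presents it only to refute it. The paper explicitly answers ``no'' to this conjecture, with counterexamples given in Figure~\ref{fig:nine}. The cleanest way to see the failure is the degenerate case $B_1=B_2=\emptyset$: coalescing onto the empty set returns $G_1$ and $G_2$ themselves, so the hypothesis reduces to mere $\mathcal{D}$-cospectrality of $G_1$ and $G_2$, while the conclusion demands that coalescing an arbitrary connected rooted graph onto \emph{every} vertex of both graphs always preserves $\mathcal{D}$-cospectrality. The paper exhibits $8$ pairs of $\mathcal{D}$-cospectral graphs on nine vertices (and $38$ on ten) admitting no similarity matrix $S$ with $SJ=JS$, and for these pairs there is \emph{no} subset of vertices --- in particular not all of $V$ --- on which one can arbitrarily coalesce and maintain cospectrality. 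So no proof can exist, and your own closing suspicion that the symmetrization step is ``possibly irreparable'' is exactly right: the $B$-side condition (here vacuous) does not force the $B^c$-side condition, and the nine-vertex pairs realize the ``minimal configuration'' you asked for.

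Two further points about your intermediate steps, since they would fail even for true instances. First, in Step 2 you propose to upgrade the determinant identities to the existence of a block-diagonal intertwiner $S=\operatorname{diag}(S_B,S_{B^c})$; but the paper shows the block-similarity condition is sufficient and \emph{not} necessary --- Figure~\ref{fig:necessary} gives adjacency examples where arbitrary coalescing works with no block-diagonal similarity matrix of the required form, and Figure~\ref{fig:Dnecessary} gives an analogous distance-matrix candidate --- so this upgrade is unjustified. Second, for the distance matrix even a full (one-block) similarity matrix does not suffice: Theorem~\ref{thm:D} additionally requires $SJ=JS$, and the counterexample pairs are precisely those where every similarity matrix fails to commute with $J$. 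Your reduction never engages this $J$-commutation constraint, which is the actual obstruction; it is also why swapping the two diagonal blocks of a putative $\operatorname{diag}(S_B,S_{B^c})$ could not, by itself, transport the coalescing property from $B$ to $B^c$.
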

See Figure~\ref{fig:nine} for graphs where this is not true (in this case $B_1=B_2=\emptyset$).

\bigskip

In the remainder of the introduction we will formally define the matrices associated with graphs and give some relevant definitions that will be useful. In Section~\ref{sec:coalescing} we will formally define coalescing and discuss how to use block structure of the similarity matrix to label graphs and their coalescings. In Section~\ref{sec:main} we will state and establish our main results, which is followed in Section~\ref{sec:examples} of various examples and applications of our results.

\subsection{Basic definitions}
In this paper we will assume that all graphs are simple (no loops or weighted edges) and undirected.

\begin{definition}
For a vertex $v$ in a graph $G$, the \emph{degree} of $v$, denoted $\deg_G(v)$, is the number of neighbor vertices of $v$.

For a pair of vertices $u,v$ in a graph $G$, the \emph{distance} between $u$ and $v$, denoted $\dist_G(u,v)$, is the length of the shortest path between $u$ and $v$. If $u=v$, then $\dist_G(u,v)=0$. If $u$ and $v$ are in separate components, then $\dist_G(u,v)=\infty$.
\end{definition}

We will consider the following three different matrices associated with graphs.

\begin{definition}
Let $q$ be fixed. The \emph{$q$-Laplacian matrix} of a graph $G$, denoted $L^{(q)}_G$ or $L^{(q)}$ when $G$ is clear from context, is defined entry-wise by
\[
L^{(q)}_{u,v}=\begin{cases}
q\deg_G(u)&\text{if $u=v$,}\\
1&\text{if $u$ and $v$ are adjacent in $G$,}\\
$0$&\text{otherwise.}
\end{cases}
\]
\end{definition}

We note that $L^{(0)}$ is the adjacency matrix, $L^{(1)}$ is the signless Laplacian, and $L^{(-1)}$ is the negative of the Laplacian. We use this convention as the proof of the main result is identical for these three matrices.

\begin{definition}
The \emph{distance matrix} of a connected graph $G$, denoted $\mathcal{D}_G$ or $\mathcal{D}$ when $G$ is clear from context, is defined entry-wise by $\mathcal{D}_{u,v}=\dist_G(u,v)$.

Given a function $f:\{0,1,\ldots\}\to \mathbb{R}$, the \emph{generalized distance matrix} of a connected graph $G$, denoted $\mathcal{D}^{f}_G$ or $\mathcal{D}^f$ when $G$ is clear from context, is defined entry-wise by $\mathcal{D}^{f}_{u,v}=f(\dist_G(u,v))$.
\end{definition}

Examples of generalized distance matrices that have been studied before include the squared distance matrix with entries $\big(\dist(u,v)\big)^2$ (see \cite{Bapat}), and the exponential distance matrix with entries $q^{\dist(u,v)}$ (see \cite{MR2242465,REU19}).

We will denote a block diagonal matrix by 
\[
B_1\oplus B_2\oplus\cdots\oplus B_\ell=\begin{pmatrix}
B_1&O&\cdots&O\\
O&B_2&\cdots&O\\
\vdots&\vdots&\ddots&\vdots\\
O&O&\cdots&B_\ell
\end{pmatrix}.
\]
Moreover, for a matrix $M$, we will let $M[s,t]$ denote the submatrix consisting of the rows in $s$ and the columns in $t$, where $s$, $t$ are subsets of indices of rows/columns. We will use $I$ to denote the identity matrix, $J$ to denote the all $1$'s matrix, and $O$ the all $0$'s matrix. For notational convenience we will suppress the notation about the sizes of $I$, $J$, and $O$ as that can be determined from context.

\section{Coalescing and graph labeling}\label{sec:coalescing}
As introduced earlier, we can think of coalescing graphs as gluing two graphs together on a common vertex. It is also possible to coalesce several graphs simultaneously. More formally, we present the following definition:

\begin{definition}
Given graphs $G$ with vertex $v$ and a rooted graph $H$, the \emph{coalescing of $H$ onto $v$}, denoted $G\coal_vH$, is the graph formed by taking the disjoint union of the graphs $G$ and $H$ and then identifying the root of $H$ and $v$ as the same vertex. 

Given a subset $V_i\subseteq V(G)$ and a rooted graph $H$, the \emph{coalescing of $H$ onto $V_i$}, denoted by $G\coal_{V_i}H$, is the graph constructed by taking the disjoint union of the graph $G$ and $|V_i|$ copies of $H$, and then for each $v\in V_i$ taking a unique copy of $H$ and identify the root of that $H$ and $v$ as the same vertex.

Given $V_1,\ldots,V_\ell\subseteq V(G)$ and rooted graphs $H_1,\ldots,H_\ell$, we will let $\displaystyle G\coal_{i=1}^\ell{}_{V_i}H_i$ denote the graph formed by the coalescing of $H_i$ onto $V_i$ for $i=1,\ldots,\ell$.
\end{definition}

When we coalesce the graph $K_1$ (the graph of a single vertex) onto a vertex $v$, the result does not change the graph structure at $v$. In particular, we can always treat every coalescing of multiple graphs by taking the vertices of $V(G)$ into a disjoint partition of sets $V_1,V_2,\ldots,V_\ell$ and then forming $\displaystyle G\coal_{i=1}^\ell{}_{V_i}H_i$ where $H_i$ might possibly be $K_1$.

\subsection{Labeling vertices in a coalescing of graphs}
A useful ingredient in establishing our main result will be using our vertex partition $V_1,\ldots,V_\ell$ to give a labeling for our graph $G$ and the corresponding graph $\displaystyle G\coal_{i=1}^\ell{}_{V_i}H_i$.

For the graph $G$, we label the vertices as $i{:}1{:}k$ which indicates the $k$-th vertex of $V_i$ with $1\le k\le |V_i|$. An example of this labeling is seen in Figure~\ref{fig:coalescing}(a) where the blocks $V_1,V_2,V_3$ are indicated with rectangles. Now label the vertices of $H_i$ with $1,\ldots,|V(H_i)|$ where $1$ is the root (the vertex we coalesce on). An example of this labeling is seen in Figure~\ref{fig:coalescing}(b). Then the labeling for $\displaystyle G\coal_{i=1}^\ell{}_{V_i}H_i$ will be $i{:}j{:}k$ which indicates vertex $j$ in the copy of $H_i$ that was coalesced onto the vertex $i{:}1{:}k$. We will use the notation $i{:}j$ to denote the set of vertices $\{i{:}j{:}k \,| \, k \in V_i\}$. 
Finally, we group the vertices of $\displaystyle G\coal_{i=1}^\ell{}_{V_i}H_i$ into blocks of the form $i{:}j$. An example of this grouping into blocks is shown in Figure~\ref{fig:coalescing}(c). We note that with this convention that the blocks of the form $i{:}1$ correspond with the blocks of the graph $G$ corresponding with our vertex partition.

\begin{figure}[htb]\centering
\PICcoalesce
\caption{An example of $\displaystyle G\coal_{i=1}^3{}_{V_i}H_i$ with labeling and block structures marked.}
\label{fig:coalescing}
\end{figure}

It will be informative to compare the distance matrix for the graphs $G$ and $\displaystyle G\coal_{i=1}^3{}_{V_i}H_i$ from Figure~\ref{fig:coalescing}. These are as follows, where we have labeled the blocks of the corresponding graphs using the convention outlined above. (We associate the vertices with the rows/columns by arranging them in lexicographical order.)

\medskip

\begin{centering}
\begin{tabular}{c@{\quad}c}
\begin{tabular}{c}
\footnotesize$\displaystyle
\begin{array}{c}
1{:}1\\
\\
\\ \hline
2{:}1 \\ \hline
3{:}1 \\
\\ \end{array}
\left(
\begin{array}{ccc|c|cc}
0&1&2&3&4&4\\
1&0&1&2&3&3\\
2&1&0&1&2&2\\ \hline
3&2&1&0&1&1\\ \hline
4&3&2&1&0&1\\
4&3&2&1&1&0\\ \end{array}
\right)$
\end{tabular}&
\begin{tabular}{c}
\footnotesize$\displaystyle
\begin{array}{c}
1{:}1\\
\\
\\ \hline
1{:}2\\
\\
\\ \hline
1{:}3\\
\\
\\ \hline
2{:}1\\ \hline
3{:}1\\
\\ \hline
3{:}2\\
\\ \hline
3{:}3\\
\\ \end{array}
\left(\begin{array}{ccc|ccc|ccc|c|cc|cc|cc}
0 & 1 & 2 & 1 & 2 & 3 & 2 & 3 & 4 & 3 & 4 & 4 & 5 & 5 & 5 & 5 \\
1 & 0 & 1 & 2 & 1 & 2 & 3 & 2 & 3 & 2 & 3 & 3 & 4 & 4 & 4 & 4 \\
2 & 1 & 0 & 3 & 2 & 1 & 4 & 3 & 2 & 1 & 2 & 2 & 3 & 3 & 3 & 3 \\ \hline
1 & 2 & 3 & 0 & 3 & 4 & 1 & 4 & 5 & 4 & 5 & 5 & 6 & 6 & 6 & 6 \\
2 & 1 & 2 & 3 & 0 & 3 & 4 & 1 & 4 & 3 & 4 & 4 & 5 & 5 & 5 & 5 \\
3 & 2 & 1 & 4 & 3 & 0 & 5 & 4 & 1 & 2 & 3 & 3 & 4 & 4 & 4 & 4 \\ \hline
2 & 3 & 4 & 1 & 4 & 5 & 0 & 5 & 6 & 5 & 6 & 6 & 7 & 7 & 7 & 7 \\
3 & 2 & 3 & 4 & 1 & 4 & 5 & 0 & 5 & 4 & 5 & 5 & 6 & 6 & 6 & 6 \\
4 & 3 & 2 & 5 & 4 & 1 & 6 & 5 & 0 & 3 & 4 & 4 & 5 & 5 & 5 & 5 \\ \hline
3 & 2 & 1 & 4 & 3 & 2 & 5 & 4 & 3 & 0 & 1 & 1 & 2 & 2 & 2 & 2 \\ \hline
4 & 3 & 2 & 5 & 4 & 3 & 6 & 5 & 4 & 1 & 0 & 1 & 1 & 2 & 1 & 2 \\
4 & 3 & 2 & 5 & 4 & 3 & 6 & 5 & 4 & 1 & 1 & 0 & 2 & 1 & 2 & 1 \\ \hline
5 & 4 & 3 & 6 & 5 & 4 & 7 & 6 & 5 & 2 & 1 & 2 & 0 & 3 & 1 & 3 \\
5 & 4 & 3 & 6 & 5 & 4 & 7 & 6 & 5 & 2 & 2 & 1 & 3 & 0 & 3 & 1 \\ \hline
5 & 4 & 3 & 6 & 5 & 4 & 7 & 6 & 5 & 2 & 1 & 2 & 1 & 3 & 0 & 3 \\
5 & 4 & 3 & 6 & 5 & 4 & 7 & 6 & 5 & 2 & 2 & 1 & 3 & 1 & 3 & 0 \\
\end{array}\right)$
\end{tabular}
\\ \\
Distance matrix for $G$&
Distance matrix for $\displaystyle G\coal_{i=1}^3{}_{V_i}H_i$
\end{tabular}
\end{centering}

\medskip

The key observation is to note that the blocks of the two matrices are similar up to shifting in predictable ways. More precisely we have the following.

\begin{lemma}\label{lem:shift}
Let $M$ be the distance matrix for the graph $G$ and let $N$ be the distance matrix for the graph $\displaystyle G\coal_{i=1}^\ell{}_{V_i}H_i$, then
\[
N[i_1{:}j_1,i_2{:}j_2]=
\begin{cases}
M[i_1{:}1,i_2{:}1]+\alpha J&\text{if $i_1 \ne i_2$},\\
M[i_1{:}1,i_2{:}1]+\alpha J+(\beta-\alpha) I&\text{if $i_1 = i_2$},
\end{cases}
\]
where $\alpha=\dist_{H_{i_1}}(1,j_1)+\dist_{H_{i_2}}(1,j_2)$ and $\beta = \dist_{H_{i_1}}(j_1,j_2)$. (Recall that $1$ corresponds with the root of the corresponding $H_{i_k}$ on which we coalesce.)
\end{lemma}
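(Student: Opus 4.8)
The plan is to compute a single entry $N[i_1{:}j_1{:}k_1, i_2{:}j_2{:}k_2]$ directly as a graph distance in $\displaystyle G\coal_{i=1}^\ell{}_{V_i}H_i$ and then read off the asserted block structure. The guiding structural fact is that each copy of a pendant graph $H_i$ is attached to the rest of the graph only at its root, so every root vertex of the form $i{:}1{:}k$ is a cut vertex separating its own copy of $H_i$ from everything else. Consequently, any walk that enters or leaves a copy of $H_i$ must pass through that copy's root.

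First I would record two consequences of this cut-vertex structure. (i) For any two root vertices $i_1{:}1{:}k_1$ and $i_2{:}1{:}k_2$ (which are exactly the vertices inherited from $G$), no pendant copy can serve as a shortcut, so the distance between them in the coalesced graph equals $\dist_G$ between the corresponding vertices of $G$, namely $M[i_1{:}1{:}k_1, i_2{:}1{:}k_2]$. (ii) For a vertex $i{:}j{:}k$ and its own root $i{:}1{:}k$, the distance equals $\dist_{H_i}(1,j)$, since the shortest path stays inside that copy of $H_i$.

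Next I would split into cases according to whether the two vertices lie in the same copy of a pendant graph. If $i_1 \ne i_2$, or if $i_1 = i_2$ but $k_1 \ne k_2$, the two vertices lie in distinct copies, so every path between them crosses both roots; using the cut-vertex property the shortest path decomposes additively as
\[
\dist_{H_{i_1}}(1,j_1) + M[i_1{:}1{:}k_1, i_2{:}1{:}k_2] + \dist_{H_{i_2}}(1,j_2) = M[i_1{:}1{:}k_1, i_2{:}1{:}k_2] + \alpha.
\]
If instead $i_1 = i_2 = i$ and $k_1 = k_2 = k$, both vertices lie in the \emph{same} copy of $H_i$; the triangle inequality $\dist_{H_i}(j_1,1)+\dist_{H_i}(1,j_2) \ge \dist_{H_i}(j_1,j_2)$ shows that leaving and re-entering through the root never helps, so the distance equals $\dist_{H_i}(j_1,j_2) = \beta$.

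Finally I would reassemble these entries into the block $N[i_1{:}j_1, i_2{:}j_2]$, indexed by $k_1 \in V_{i_1}$ (rows) and $k_2 \in V_{i_2}$ (columns). When $i_1 \ne i_2$ every entry is of the first type, giving $M[i_1{:}1, i_2{:}1] + \alpha J$. When $i_1 = i_2$ the off-diagonal entries ($k_1 \ne k_2$) contribute $M[i{:}1{:}k_1, i{:}1{:}k_2] + \alpha$ while each diagonal entry contributes $\beta$; since the distance matrix has zero diagonal we have $M[i{:}1{:}k, i{:}1{:}k] = 0$, so the diagonal entry equals $M[i{:}1{:}k,i{:}1{:}k] + \alpha + (\beta-\alpha)$, and the whole block is $M[i{:}1, i{:}1] + \alpha J + (\beta-\alpha) I$. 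The main (and essentially only) obstacle is the careful justification that pendant copies never provide a shortcut—that is, the additive decomposition in the distinct-copies case and the no-detour claim in the same-copy case—both of which reduce to the single root of each $H_i$ being a cut vertex.
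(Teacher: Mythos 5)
Your proof is correct and takes essentially the same approach as the paper: both arguments rest on the observation that each root $i{:}1{:}k$ is a cut vertex, so shortest paths between distinct copies decompose additively as $\dist_{H_{i_1}}(1,j_1)+\dist_G(\cdot,\cdot)+\dist_{H_{i_2}}(1,j_2)$, with the diagonal entries (same copy) handled separately. Your explicit triangle-inequality justification that detours through the root never help, and the use of the zero diagonal of $M$ to write $\beta$ as $M[i{:}1{:}k,i{:}1{:}k]+\alpha+(\beta-\alpha)$, are details the paper leaves implicit, but the argument is the same.
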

\begin{proof}
When $i_1\ne i_2$, any path connecting $i_1{:}j_1{:}k_1$ with $i_2{:}j_2{:}k_2$ must travel through the two cut vertices $i_1{:}1{:}k_1$ with $i_2{:}1{:}k_2$ (locations of where coalescing occurred) in the base graph. In particular, the shortest path is naturally split into three parts with the distance as follows:
\[
\dist_{H_{i_1}}(1,j_1)+\dist_G(i_1{:}1{:}k_1,i_2{:}1{:}k_2)+\dist_{H_{i_2}}(1,j_2).
\]
The first and the last terms are independent of the choices of $k_1$ and $k_2$ so can be pulled out as an additive constant ($\alpha$) for each entry, the middle term then comes from $M[i_1{:}1,i_2{:}1]$.

When $i_1=i_2=i$, the result is similar with the exception of $k_1=k_2=k$ (the diagonal terms). In this case case the shortest path does not go through $G$ but stays inside of $H_i$ (that is, we are finding distance internally to the graph $H_i$ that was glued onto ${i}{:}1{:}k$). So we can add $\alpha$ to all entries (the $+\alpha J$) and then on the diagonal subtract out $\alpha$ and add $\beta=\dist_{H_i}(j_1,j_2)$.
\end{proof}

\section{Main results}\label{sec:main}
Our main results involve a sufficient condition for constructing cospectral graphs from coalescing based off of similarity matrices, and in particular block similarity matrices. We start by showing how to use the block structure of the similarity matrix to partition the vertices.

\begin{definition}
Let $S=B_1\oplus \cdots\oplus B_\ell$ be a similarity matrix for $M_{G_1}$ and $M_{G_2}$, matrices associated with the graphs $G_1$ and $G_2$. Then a \emph{similarity vertex partitioning} $V_1, \ldots,V_\ell$ is a partitioning of the vertices of $G_1$ and $G_2$ by associating the vertices in the rows corresponding with block $B_i$ into vertex set $V_i$.
\end{definition}

We now state our main results; each one of them dealing with coalescing and similarity vertex partitioning for different families of graphs.

\begin{theorem}\label{thm:Lq}
If $G_1$ and $G_2$ are cospectral for the $L^{(q)}$ matrix with $S=B_1\oplus \cdots\oplus B_\ell$ as a similarity matrix, then for the similarity vertex partitioning $V_1,\ldots,V_\ell$ the graphs $\displaystyle G_1\coal_{i=1}^\ell{}_{V_i}H_i$ and $\displaystyle G_2\coal_{i=1}^\ell{}_{V_i}H_i$ are cospectral for the $L^{(q)}$ matrix for any choice of $H_1,\ldots,H_\ell$.
\end{theorem}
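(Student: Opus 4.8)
The plan is to prove cospectrality by exhibiting an explicit invertible matrix $\tilde S$ that conjugates the $L^{(q)}$ matrix of the first coalesced graph into that of the second, built block-by-block out of the given $S=B_1\oplus\cdots\oplus B_\ell$. Throughout, write $M_1=L^{(q)}_{G_1}$, $M_2=L^{(q)}_{G_2}$, and let $N_1,N_2$ denote the $L^{(q)}$ matrices of $\displaystyle G_1\coal_{i=1}^\ell{}_{V_i}H_i$ and $\displaystyle G_2\coal_{i=1}^\ell{}_{V_i}H_i$, with vertices ordered lexicographically by the labels $i{:}j{:}k$ of Section~\ref{sec:coalescing}. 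The hypothesis gives $M_1S=SM_2$ with each $B_i$ invertible, and $|V_i|$ is the same in both graphs (it is the size of $B_i$).

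First I would establish the $L^{(q)}$ analogue of Lemma~\ref{lem:shift}: an exact description of the blocks $N_t[i_1{:}j_1,i_2{:}j_2]$, obtained by a routine adjacency-and-degree count. An internal vertex $i{:}j{:}k$ (with $j\ne1$) is adjacent only to vertices $i{:}j'{:}k$ in its own copy of $H_i$, so its degree is $\deg_{H_i}(j)$; a root vertex $i{:}1{:}k$ is adjacent to its $G$-neighbours (all of the form $*{:}1{:}*$) together with the $H_i$-neighbours of the root, so its degree is $\deg_G(i{:}1{:}k)+\deg_{H_i}(1)$. Collecting these gives, for both $t=1,2$, the decomposition
\[
N_t=\Big(\bigoplus_{i=1}^\ell L^{(q)}_{H_i}\otimes I_{|V_i|}\Big)+\widehat{M_t},
\]
where $A\otimes I_{|V_i|}$ denotes the block matrix whose $(j_1,j_2)$ block is $A_{j_1,j_2}I_{|V_i|}$, and $\widehat{M_t}$ is supported on the root blocks: $\widehat{M_t}[i_1{:}1,i_2{:}1]=M_t[i_1{:}1,i_2{:}1]$, and $\widehat{M_t}[i_1{:}j_1,i_2{:}j_2]=O$ whenever $j_1\ne1$ or $j_2\ne1$. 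The only delicate point is the root diagonal, where $\widehat{M_t}$ supplies $q\deg_G(i{:}1{:}k)$ and the $H$-summand supplies $(L^{(q)}_{H_i})_{1,1}=q\deg_{H_i}(1)$; these sum to the correct coalesced degree.

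With this decomposition in hand I would set
\[
\tilde S=\bigoplus_{i=1}^\ell\big(I_{|V(H_i)|}\otimes B_i\big),
\]
the block-diagonal matrix that acts as $B_i$ on each $j$-slice of block $i$ and mixes no two $j$-indices; it is invertible since every $B_i$ is. The conjugation $\tilde S^{-1}N_1\tilde S$ then splits along the two summands. On the $H$-summand, each factor $(I\otimes B_i)^{-1}(L^{(q)}_{H_i}\otimes I)(I\otimes B_i)$ collapses by the mixed-product rule to $L^{(q)}_{H_i}\otimes(B_i^{-1}B_i)=L^{(q)}_{H_i}\otimes I$, so this summand is fixed and is identical for $N_1$ and $N_2$ (it depends only on the $H_i$). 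On the $\widehat{M_1}$-summand, since $\tilde S$ preserves each block $i{:}j$ and acts there by $B_i$, the root blocks transform as $M_1[i_1{:}1,i_2{:}1]\mapsto B_{i_1}^{-1}M_1[i_1{:}1,i_2{:}1]B_{i_2}$ while everything else stays $O$; and $B_{i_1}^{-1}M_1[i_1{:}1,i_2{:}1]B_{i_2}=M_2[i_1{:}1,i_2{:}1]$ is precisely the $(i_1,i_2)$ block of $S^{-1}M_1S=M_2$. Hence $\tilde S^{-1}\widehat{M_1}\tilde S=\widehat{M_2}$, so $\tilde S^{-1}N_1\tilde S=N_2$ and the two coalesced graphs are similar, hence cospectral.

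The main obstacle is the bookkeeping of the first step: pinning down the label and Kronecker conventions so that the block decomposition of $N_t$ is literally correct, in particular the root-diagonal degree splitting and the vanishing of all cross terms between distinct copies of $H$. Once the decomposition is fixed, the conjugation argument is forced and purely formal. It is worth noting explicitly that the proof uses nothing about $S$ beyond invertibility and block-diagonality (no orthogonality), which is exactly why the same $\tilde S$ works verbatim; the same skeleton should recur for the distance matrix, except that there the $H$-summand will have to absorb the additive $\alpha J$ and $\beta I$ shifts of Lemma~\ref{lem:shift} instead of sitting in a clean direct sum.
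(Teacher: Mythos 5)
Your proof is correct and takes essentially the same approach as the paper: your $\tilde S$ is exactly the paper's similarity matrix $\widehat{S}$ (a direct sum of $|V(H_i)|$ copies of each $B_i$), and your Kronecker-sum decomposition of $N_t$ into an $H$-summand plus a root-supported $\widehat{M_t}$ simply repackages the paper's case-by-case description of the blocks of $M_{\widehat{G_t}}$ before carrying out the same block-wise conjugation.
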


\begin{theorem}\label{thm:D}
If $G_1$ and $G_2$ are cospectral for the $\mathcal{D}$ matrix with $S=B_1\oplus \cdots\oplus B_\ell$ as a similarity matrix satisfying $SJ=JS$, then for the similarity vertex partitioning $V_1,\ldots,V_\ell$ the graphs $\displaystyle G_1\coal_{i=1}^\ell{}_{V_i}H_i$ and $\displaystyle G_2\coal_{i=1}^\ell{}_{V_i}H_i$ are cospectral for the $\mathcal{D}$ (distance) matrix for any choice of $H_1,\ldots,H_\ell$.
\end{theorem}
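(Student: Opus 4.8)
The plan is to exhibit an explicit invertible matrix $\tilde S$ that conjugates the two coalesced distance matrices, built by replicating the diagonal blocks of $S$. Write $M_1=\mathcal{D}_{G_1}$ and $M_2=\mathcal{D}_{G_2}$ for the base distance matrices and $N_1$, $N_2$ for the distance matrices of $\displaystyle G_1\coal_{i=1}^\ell{}_{V_i}H_i$ and $\displaystyle G_2\coal_{i=1}^\ell{}_{V_i}H_i$. Since $S=B_1\oplus\cdots\oplus B_\ell$ is an invertible similarity matrix, we have $M_1 S=S M_2$, and because $\det S=\prod_i\det B_i\neq 0$ each block $B_i$ is itself invertible. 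Using the block indexing $i{:}j$ from Section~\ref{sec:coalescing}, where every block $i{:}j$ has the same cardinality $|V_i|$ as $B_i$, I would define $\tilde S$ to be the block-diagonal matrix whose diagonal block indexed by $i{:}j$ is $B_i$; that is, $\tilde S$ repeats $B_i$ once for each vertex $j$ of $H_i$. This $\tilde S$ is invertible because each $B_i$ is, so it suffices to prove $N_1\tilde S=\tilde S N_2$, which immediately gives $N_2=\tilde S^{-1}N_1\tilde S$ and hence cospectrality.

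The verification of $N_1\tilde S=\tilde S N_2$ proceeds block by block over the index pairs $(i_1{:}j_1,\,i_2{:}j_2)$. Because $\tilde S$ is block diagonal, the corresponding block of $N_1\tilde S$ equals $N_1[i_1{:}j_1,i_2{:}j_2]\,B_{i_2}$, while that of $\tilde S N_2$ equals $B_{i_1}\,N_2[i_1{:}j_1,i_2{:}j_2]$. Now I invoke Lemma~\ref{lem:shift}: in both the $i_1\neq i_2$ and the $i_1=i_2$ cases, $N_r[i_1{:}j_1,i_2{:}j_2]$ decomposes as $M_r[i_1{:}1,i_2{:}1]$ plus a shift that is \emph{identical} for $r=1,2$, since the scalars $\alpha$ and $\beta$ depend only on the fixed rooted graphs $H_i$ and not on $G_1$ versus $G_2$. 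Substituting this decomposition, the identity to be checked splits into three pieces that can be handled independently: the base term $M_1[i_1{:}1,i_2{:}1]\,B_{i_2}=B_{i_1}\,M_2[i_1{:}1,i_2{:}1]$, the $\alpha J$ term, and (when $i_1=i_2$) the $(\beta-\alpha)I$ term.

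The base term is exactly the $(i_1,i_2)$ block of the similarity relation $M_1 S=S M_2$, read off using the block-diagonal form of $S$, so it holds by hypothesis. The $(\beta-\alpha)I$ term reduces to the trivial identity $(\beta-\alpha)B_i=(\beta-\alpha)B_i$, since the identity commutes with everything. The crux—and the reason the extra hypothesis $SJ=JS$ is needed here but not in Theorem~\ref{thm:Lq}—is the $\alpha J$ term, where one must verify $J B_{i_2}=B_{i_1}J$ for the all-ones blocks. This is precisely the $(i_1,i_2)$ block of the equation $SJ=JS$ expressed in the vertex partition, so the assumption delivers exactly what is required. Combining the three pieces across all block pairs yields $N_1\tilde S=\tilde S N_2$.

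The main obstacle I anticipate is the bookkeeping rather than any hard computation: one must confirm that the shift scalars $\alpha,\beta$ genuinely coincide for $G_1$ and $G_2$ (they do, being intrinsic to the $H_i$'s and to the combinatorics of coalescing), and that the all-ones blocks appearing in Lemma~\ref{lem:shift} are exactly the blocks of the full all-ones matrix $J$ used in $SJ=JS$. Once these identifications are in place, the argument is a clean block-matrix manipulation, with $SJ=JS$ doing the essential work of neutralizing the constant distance shift $\alpha J$ that has no analogue in the sparse $L^{(q)}$ setting.
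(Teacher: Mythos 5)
Your proposal is correct and follows essentially the same route as the paper's proof: the same replicated block-diagonal similarity matrix $\widehat{S}$, the same block-by-block reduction, and the same three-way split via Lemma~\ref{lem:shift} into the base term (handled by the similarity relation), the $\alpha J$ term (handled by the block consequence $B_{i_1}J=JB_{i_2}$ of $SJ=JS$), and the $(\beta-\alpha)I$ term (trivial since $i_1=i_2$ forces $B_{i_1}=B_{i_2}$). The only difference is the immaterial choice of which side of the similarity relation you conjugate on.
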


The difference between Theorem~\ref{thm:Lq} and Theorem~\ref{thm:D} is that in the latter case we need the assumption $SJ=JS$; we will have more discussion related to this assumption in Section~\ref{sec:examples}.

\begin{theorem}\label{thm:genD}
Given a graph $G$ let $G^{(t)}$ be the graph with $V(G^{(t)})=V(G)$ and $E(G^{(t)})=\big\{\{u,v\}\mid\dist_G(u,v)=t\big\}$. If $S=B_1\oplus \cdots\oplus B_\ell$ is \emph{simultaneously} a similarity matrix for $G_1^{(t)}$ and $G_2^{(t)}$ for the matrix $L^{(0)}$ (adjacency matrix) for $t=0,1,2,\ldots$, then for the similarity vertex partitioning $V_1,\ldots,V_\ell$ the graphs $\displaystyle G_1\coal_{i=1}^\ell{}_{V_i}H_i$ and $\displaystyle G_2\coal_{i=1}^\ell{}_{V_i}H_i$ are cospectral for the $\mathcal{D}^f$ (generalized distance) matrix for any choice of $H_1,\ldots,H_\ell$ and any choice of function $f$.
\end{theorem}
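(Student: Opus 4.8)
The plan is to linearize the generalized distance matrix so that the hypothesis, which controls a single similarity $S$ working simultaneously for all the distance-$t$ graphs, can be applied one degree at a time. The starting observation is that for a connected graph $G$,
\[
\mathcal{D}^f_G=\sum_{t\ge 0}f(t)\,A^{(t)}_G,
\]
where $A^{(t)}_G$ is the adjacency matrix of $G^{(t)}$ and we adopt the convention $A^{(0)}_G=I$ (equivalently, split off the $f(0)I$ on the diagonal). Indeed the $(u,v)$ entry of the right side is $f(t)$ for the unique $t=\dist_G(u,v)$, and the sum is finite because $G$ is connected. I would also note that the $t=0$ case of the hypothesis is automatic, since $S$ is trivially a similarity for $I$; all the content lies in $t\ge 1$.

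Next I would establish the generalized-distance analogue of Lemma~\ref{lem:shift}, which I expect to be the crux. Write $N_m=\mathcal D^f_{\hat G_m}$ for $\hat G_m=G_m\coal_{i=1}^\ell{}_{V_i}H_i$ with $m=1,2$. The distance decomposition already contained in the proof of Lemma~\ref{lem:shift} gives, for a generic off-diagonal pair, $\dist_{\hat G_m}(i_1{:}j_1{:}k_1,i_2{:}j_2{:}k_2)=\alpha+\dist_{G_m}(i_1{:}1{:}k_1,i_2{:}1{:}k_2)$ with $\alpha=\dist_{H_{i_1}}(1,j_1)+\dist_{H_{i_2}}(1,j_2)$, while on the diagonal ($i_1=i_2$, $k_1=k_2$) the distance equals $\beta=\dist_{H_{i_1}}(j_1,j_2)$. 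The linearizing step is the entrywise identity $f(\alpha+d)=\sum_{t}f(\alpha+t)\,\mathbf 1\{d=t\}$, which converts each block into
\[
N_m[i_1{:}j_1,i_2{:}j_2]=\sum_{t\ge0}f(\alpha+t)\,A^{(t)}_m[i_1{:}1,i_2{:}1]+\delta_{i_1i_2}\bigl(f(\beta)-f(\alpha)\bigr)I,
\]
where $A^{(t)}_m[i_1{:}1,i_2{:}1]$ denotes the $V_{i_1}\times V_{i_2}$ submatrix of $A^{(t)}_{G_m}$ and the correction term repairs the diagonal, on which the generic formula wrongly returns the $t=0$ value $f(\alpha)$ instead of $f(\beta)$. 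Crucially $\alpha$ and $\beta$ depend only on the rooted graphs $H_i$, hence are identical for $m=1$ and $m=2$.

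Finally I would lift $S$ to the coalesced graphs. Define $\tilde S$ to be block diagonal over the $i{:}j$ blocks, acting as $B_i$ on each fiber $V_i$ independently of $j$; since each $B_i$ is invertible, so is $\tilde S$. Verifying $N_1\tilde S=\tilde S N_2$ reduces block-by-block to $N_1[i_1{:}j_1,i_2{:}j_2]\,B_{i_2}=B_{i_1}\,N_2[i_1{:}j_1,i_2{:}j_2]$. Substituting the decomposition, the summation terms agree because the simultaneous-similarity hypothesis reads block-wise (from $A_1^{(t)}S=SA_2^{(t)}$) as $A^{(t)}_1[i_1{:}1,i_2{:}1]B_{i_2}=B_{i_1}A^{(t)}_2[i_1{:}1,i_2{:}1]$ for every $t$, and the diagonal correction, being $(f(\beta)-f(\alpha))I$ multiplied by $B_i$ on either side, matches trivially. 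Thus $N_1$ and $N_2$ are similar, so $\hat G_1$ and $\hat G_2$ are cospectral for $\mathcal D^f$ for every choice of $H_1,\ldots,H_\ell$ and every $f$. The essential point is that reducing to the $A^{(t)}$ is exactly what the hypothesis governs, and the single block-lift $\tilde S$ works uniformly because the additive shift $\alpha$ and the diagonal repair $f(\beta)-f(\alpha)$ are scalar data independent of the base graph.
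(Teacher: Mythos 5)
Your proposal is correct and follows essentially the same route as the paper's proof: decompose $\mathcal{D}^f$ as $\sum_{t\ge 0}f(t)A^{(t)}$, apply Lemma~\ref{lem:shift} to get the shifted block formulas for the coalesced graphs, lift $S$ to the block-diagonal similarity matrix repeating each $B_i$ over the copies of $H_i$, and verify the similarity block-by-block using the simultaneous hypothesis $SA^{(t)}_{G_1}=A^{(t)}_{G_2}S$ together with the fact that the diagonal correction is a scalar multiple of $I$. Your only cosmetic deviations are writing the two cases ($i_1\ne i_2$ versus $i_1=i_2$) as a single formula with a $\delta_{i_1 i_2}$ correction term and using the opposite (equivalent) sidedness convention for the similarity, neither of which changes the argument.
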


\subsection{Proof of Theorems~\ref{thm:Lq}, \ref{thm:D}, and \ref{thm:genD}}
For convenience we will assume that all graphs are labeled using the conventions given in Section~\ref{sec:coalescing}.

Let $M$ be the matrix that we are considering. We have that $S$ is a similarity matrix so we can assume that $SM_{G_1}S^{-1}=M_{G_2}$, or more conveniently $SM_{G_1}=M_{G_2}S$. We then have that submatrices are equal so $(SM_{G_1})[i_1{:}1,i_2{:}1]=(M_{G_2}S)[i_1{:}1,i_2{:}1]$. Finally, using that $S$ is block diagonal this becomes
\begin{equation}\label{eq:subblock}
B_{i_1}M_{G_1}[i_1{:}1,i_2{:}1]=M_{G_2}[i_1{:}1,i_2{:}1]B_{i_2}.
\end{equation}

Let $\widehat{G_1}=\displaystyle G_1\coal_{i=1}^\ell{}_{V_i}H_i$ and $\widehat{G_2}=\displaystyle G_2\coal_{i=1}^\ell{}_{V_i}H_i$. To show that $M_{\widehat{G_1}}$ is cospectral with $M_{\widehat{G_2}}$ we will produce a similarity matrix $\widehat{S}$ so that $\widehat{S}M_{\widehat{G_1}}=M_{\widehat{G_2}}\widehat{S}$. Our desired similarity matrix is
\[
\widehat{S}=\underbrace{B_1\oplus\cdots\oplus B_1}_{\text{$|V(H_1)|$ times}}\oplus\underbrace{B_2\oplus\cdots\oplus B_2}_{\text{$|V(H_2)|$ times}}\oplus\cdots\oplus\underbrace{B_\ell\oplus\cdots\oplus B_\ell}_{\text{$|V(H_\ell)|$ times}}.
\]
We note in passing that since $S$ is a similarity matrix, it must be invertible so each individual block is square and invertible. Now since $\widehat{S}$ is a block diagonal matrix and each block on the diagonal is square and invertible it itself is also invertible.

To establish $\widehat{S}M_{\widehat{G_1}}=M_{\widehat{G_2}}\widehat{S}$ it suffices to establish it for a collection of submatrices which cover the matrix. In particular, it suffices to show that $(\widehat{S}M_{\widehat{G_1}})[i_1{:}j_1,i_2{:}j_2]=(M_{\widehat{G_2}}\widehat{S})[i_1{:}j_1,i_2{:}j_2]$ for all possible $i_1,j_1,i_2,j_2$. As before, using that $\widehat{S}$ is block diagonal this reduces to verifying
\begin{equation}\label{eq:BIGsubblock}
B_{i_1}M_{\widehat{G_1}}[i_1{:}j_1,i_2{:}j_2]=M_{\widehat{G_2}}[i_1{:}j_1,i_2{:}j_2]B_{i_2}
\end{equation}
to establish the result.

\subsubsection*{Verification of \eqref{eq:BIGsubblock} for Theorem~\ref{thm:Lq}}
For $M=L^{(q)}$ we have the following for $G_1$ (and similar result with $G_2$):
\[
M_{\widehat{G_1}}[i_1{:}j_1,i_2{:}j_2]=
\begin{cases}
M_{G_1}[i_1{:}1,i_1{:}1]+q\deg_{H_{i_1}}(1)I&\text{if $i_1=i_2$ and $j_1=j_2=1$},\\
M_{G_1}[i_1{:}1,i_2{:}1]&\text{if $i_1\ne i_2$ and $j_1=j_2=1$},\\
q\deg_{H_{i_1}}(j_1)I&\text{if $i_1=i_2$ and $j_1=j_2>1$},\\
I&\text{if $i_1=i_2$ and $\{j_1,j_2\}\in E(H_1)$},\\
O&\text{else}.
\end{cases}
\]
In this case we have that \eqref{eq:BIGsubblock} follows in each case by appropriate combination of \eqref{eq:subblock} (for when $j_1=j_2=1$), $B_{i_1}I=IB_{i_1}$, and $B_{i_1}O=O=OB_{i_2}$. We illustrate with the case $i_1=i_2$ and $j_1=j_2=1$, the other cases are handled similarly.
\begin{align*}
B_{i_1}M_{\widehat{G_1}}[i_1{:}j_1,i_2{:}j_2]
&=B_{i_1}\big(M_{G_1}[i_1{:}1,i_1{:}1]+q\deg_{H_{i_1}}(1)I\big)\\
&=B_{i_1}M_{G_1}[i_1{:}1,i_1{:}1]+q\deg_{H_{i_1}}(1)B_{i_1}I\\
&=M_{G_2}[i_1{:}1,i_1{:}1]B_{i_2}+q\deg_{H_{i_1}}(1)IB_{i_2}\\
&=\big(M_{G_2}[i_1{:}1,i_1{:}1]+q\deg_{H_{i_1}}(1)I\big)B_{i_2}\\
&=M_{\widehat{G_2}}[i_1{:}j_1,i_2{:}j_2]B_{i_2}
\end{align*}

\subsubsection*{Verification of \eqref{eq:BIGsubblock} for Theorem~\ref{thm:D}}
From our added assumption that $SJ=JS$ we have that $B_{i_1}J=JB_{i_2}$ which follows using the same argument as \eqref{eq:subblock}. 

For $M=\mathcal{D}$ and Lemma~\ref{lem:shift} we have the following for $G_1$ (and similar result with $G_2$):
\[
M_{\widehat{G_1}}[i_1{:}j_1,i_2{:}j_2]=
\begin{cases}
M_{G_1}[i_1{:}1,i_2{:}1]+\alpha J&\text{if $i_1 \ne i_2$},\\
M_{G_1}[i_1{:}1,i_2{:}1]+\alpha J+(\beta-\alpha) I&\text{if $i_1 = i_2$},
\end{cases}
\]
where $\alpha=\dist_{H_{i_1}}(1,j_1)+\dist_{H_{i_2}}(1,j_2)$ and $\beta = \dist_{H_{i_1}}(j_1,j_2)$. Note that $\beta$ and $\alpha$ are independent of $G_1$ and $G_2$.

In this case we have that \eqref{eq:BIGsubblock} follows in each case by appropriate combination of \eqref{eq:subblock}, $B_{i_1}I=IB_{i_1}$, and $B_{i_1}J=JB_{i_2}$. We illustrate with the case $i_1=i_2$, the other case is handled similarly.
\begin{align*}
B_{i_1}M_{\widehat{G_1}}[i_1{:}j_1,i_2{:}j_2]
&=B_{i_1}\big(M_{G_1}[i_1{:}1,i_2{:}1]+\alpha J+(\beta-\alpha) I\big)\\
&=B_{i_1}M_{G_1}[i_1{:}1,i_2{:}1]+\alpha B_{i_1}J+(\beta-\alpha) B_{i_1}I\\
&=M_{G_2}[i_1{:}1,i_2{:}1]B_{i_2}+\alpha JB_{i_2}+(\beta-\alpha) IB_{i_2}\\
&=\big(M_{G_2}[i_1{:}1,i_2{:}1]+\alpha J+(\beta-\alpha) I\big)B_{i_2}\\
&=M_{\widehat{G_2}}[i_1{:}j_1,i_2{:}j_2]B_{i_2}
\end{align*}

\subsubsection*{Verification of \eqref{eq:BIGsubblock} for Theorem~\ref{thm:genD}}
For $M=\mathcal{D}^f$ we will let $A_{G_p}^{(t)}$ denote the adjacency matrix ($L^{(0)}$) for $G_p^{(t)}$. Since $S$ is a similarity matrix for the adjacency matrices for $G_1^{(t)}$ and $G_2^{(t)}$ it follows that $SA_{G_1}^{(t)}=A_{G_2}^{(t)}S$ for $t=0,1,2,\ldots$. By the same argument as \eqref{eq:subblock} we have for all $t=0,1,2,\ldots$
\begin{equation}\label{eq:genD}
B_{i_1}A_{G_1}^{(t)}[i_1{:}1,i_2{:}1]=A_{G_2}^{(t)}[i_1{:}1,i_2{:}1]B_{i_2}.
\end{equation}

For $G_1$ (and similarly $G_2$) we have:
\[
M_{G_1}[i_1{:}1,i_2{:}1]=\sum_{t\ge0}f(t)A_{G_1}^{(t)}[i_1{:}1,i_2{:}1].
\]
Because entries are based off of distance, we can apply Lemma~\ref{lem:shift} and note that internally in our blocks that distances all shift in uniform ways. In particular, we have the following for $G_1$ (and similar result with $G_2$):
\[
M_{\widehat{G_1}}[i_1{:}j_1,i_2{:}j_2]=
\begin{cases}
\displaystyle\sum_{t\ge 0}f(t+\alpha)A_{G_1}^{(t)}[i_1{:}1,i_2{:}1]&\text{if $i_1 \ne i_2$},\\[17pt]
\displaystyle\sum_{t\ge 1}f(t+\alpha)A_{G_1}^{(t)}[i_1{:}1,i_2{:}1]+f(\beta)I&\text{if $i_1 = i_2$},
\end{cases}
\]
where $\alpha=\dist_{H_{i_1}}(1,j_1)+\dist_{H_{i_2}}(1,j_2)$ and $\beta = \dist_{H_{i_1}}(j_1,j_2)$. Note that $\beta$ and $\alpha$ are independent of $G_1$ and $G_2$.

In this case we have that \eqref{eq:BIGsubblock} follows in each case by appropriate combination of \eqref{eq:genD} and $B_{i_1}I=IB_{i_1}$. We illustrate with the case $i_1=i_2$, the other case is handled similarly.
\begin{align*}
B_{i_1}M_{\widehat{G_1}}[i_1{:}j_1,i_2{:}j_2]
&=B_{i_1}\bigg(\sum_{t\ge 1}f(t+\alpha)A_{G_1}^{(t)}[i_1{:}1,i_2{:}1]+f(\beta)I\bigg)\\
&=\sum_{t\ge 1}f(t+\alpha)B_{i_1}A_{G_1}^{(t)}[i_1{:}1,i_2{:}1]+f(\beta)B_{i_1}I\\
&=\sum_{t\ge 1}f(t+\alpha)A_{G_2}^{(t)}[i_1{:}1,i_2{:}1]B_{i_2}+f(\beta)IB_{i_2}\\
&=\bigg(\sum_{t\ge 1}f(t+\alpha)A_{G_2}^{(t)}[i_1{:}1,i_2{:}1]+f(\beta)I\bigg)B_{i_2}\\
&=M_{\widehat{G_2}}[i_1{:}j_1,i_2{:}j_2]B_{i_2}
\end{align*}

\section{Comments and applications}\label{sec:examples}
In order to apply Theorems~\ref{thm:Lq}, \ref{thm:D}, and \ref{thm:genD}, one must first find a block diagonal similarity matrix. It is not always obvious, a priori, when such block diagonal similarity matrices exist. However, the results of this paper suggest an approach to potentially finding them. Namely, by experimentation, e.g.\ using several small graphs such as stars, look for subsets of vertices where coalescing on the same set produces cospectral graphs. This leads to a potential vertex partitioning which we can use to add additional constraints when looking for block diagonal similarity matrices. This was the technique that the authors used in their exploration of small cases.

We now give some comments, a few examples, and applications of the results. For many of the graphs which are shown, we will indicate the corresponding \verb!graph6! code for reference for anyone wanting to input the graphs into a computer.

\subsubsection*{A sufficient but not necessary condition}
The results of this paper give a sufficient condition for when we can coalesce, however this is not a necessary condition. In Butler et al.\ \cite{REU22} necessary \emph{and} sufficient conditions were derived in the case $L^{(q)}$. Here we state a simple version in terms of our notation for the adjacency matrix with vertex partitioning $V(G)=V_1\cup V_2$.

\begin{theorem}[Butler et al.\ \cite{REU22}]\label{thm:necessary}
Given a graph $H$ and $U\subseteq V(H)$, let $p_{H,U}(x)$ denote the characteristic polynomial of the adjacency matrix for the induced subgraph in $H$ on the vertices $U$. Given graphs $G_1$ and $G_2$ with the same labeling of vertices and partition $V=V_1\cup V_2$, then the graphs $\displaystyle G_1\coal_{i=1}^2{}_{V_i}H_i$ and $\displaystyle G_2\coal_{i=1}^2{}_{V_i}H_i$ are cospectral for the adjacency matrix for any choice of $H_1$ and $H_2$ if and only if for all $k,\ell$ we have
\[
\sum_{\substack{S\subseteq V_1,|S|=k\\
T\subseteq V_2, |T|=\ell}}p_{G_1,S\cup T}(x)=\sum_{\substack{S\subseteq V_1,|S|=k\\
T\subseteq V_2, |T|=\ell}}p_{G_2,S\cup T}(x).
\]
\end{theorem}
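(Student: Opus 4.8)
The plan is to reduce everything to a single determinantal identity via Schwenk's coalescence formula and multilinearity of the determinant. First I would recall Schwenk's formula \cite{Schwenk}: if $H$ is rooted at $v$ and we attach it to a vertex $u$ of a graph $G$, then
\[
\phi(G\coal_u H,x)=\phi(H\setminus v,x)\,\phi(G,x)+\big(\phi(H,x)-x\,\phi(H\setminus v,x)\big)\,\phi(G\setminus u,x),
\]
where $\phi(\cdot,x)=\det(xI-A_{\cdot})$ is the adjacency characteristic polynomial. Writing $a_H=\phi(H\setminus v,x)$ and $c_H=\phi(H,x)/\phi(H\setminus v,x)-x$, and using that $\det(xI-A_G+c\,E_{uu})=\phi(G,x)+c\,\phi(G\setminus u,x)$ by linearity of the determinant in the $(u,u)$ entry (here $E_{uu}$ is a matrix unit), this rewrites as $\phi(G\coal_u H,x)=a_H\det(xI-A_G+c_H E_{uu})$. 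The upshot is that, up to the scalar $a_H$, attaching $H$ at $u$ is the same as perturbing the $(u,u)$ diagonal entry by $c_H$, a quantity depending only on $H$ and $x$, not on $G$.

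Next I would promote this to all of $V_1\cup V_2$ by induction on the set of attached vertices, stated for \emph{every} base graph at once so that the vertex-deletion in the inductive step stays within the hypothesis. The inductive step peels off one attachment via the coalescence formula and applies the hypothesis twice — once to $G$ and once to $G\setminus u$ — recombining the two terms by the same multilinearity identity. The conclusion is
\[
\phi\Big(G\coal_{i=1}^2{}_{V_i}H_i,\,x\Big)=a_{H_1}^{|V_1|}a_{H_2}^{|V_2|}\,\det\big(xI-A_G+c_1 D_1+c_2 D_2\big),
\]
where $c_i=c_{H_i}$ and $D_i=\sum_{u\in V_i}E_{uu}$ is the diagonal indicator of $V_i$. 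Because the common labeling makes $|V_1|,|V_2|,D_1,D_2$ identical for $G_1$ and $G_2$, the prefactor $a_{H_1}^{|V_1|}a_{H_2}^{|V_2|}$ cancels, so the two coalescings are cospectral for all $H_1,H_2$ if and only if the two determinants agree for all admissible pairs $(c_1,c_2)$.

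I would then expand the determinant multilinearly in the diagonal perturbations via $\det(M+\operatorname{diag}(d))=\sum_{U}\big(\prod_{u\in U}d_u\big)\det(M[\bar U,\bar U])$ with $M=xI-A_G$, and group by the sizes $k=|S|$, $\ell=|T|$ of the selected subsets $S\subseteq V_1$, $T\subseteq V_2$:
\[
\det\big(xI-A_G+c_1 D_1+c_2 D_2\big)=\sum_{k,\ell}c_1^{\,k}c_2^{\,\ell}\!\!\sum_{\substack{S\subseteq V_1,|S|=k\\ T\subseteq V_2,|T|=\ell}}\!\!p_{G,\overline{S\cup T}}(x),
\]
since each principal minor $\det\big((xI-A_G)[\overline{S\cup T},\overline{S\cup T}]\big)$ equals $p_{G,\overline{S\cup T}}(x)$. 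Comparing the bivariate polynomials for $G_1$ and $G_2$ coefficient-by-coefficient in $c_1,c_2$ gives the condition, and replacing $S,T$ by their complements in $V_1,V_2$ (i.e.\ $k\mapsto|V_1|-k$, $\ell\mapsto|V_2|-\ell$, harmless since the condition is quantified over all $k,\ell$) turns $p_{G,\overline{S\cup T}}$ into $p_{G,S\cup T}$, matching the stated form.

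Sufficiency is then immediate: if the coefficient sums agree for all $k,\ell$, the two bivariate polynomials coincide identically, so the determinants agree for every $H_1,H_2$. The one delicate point, which I expect to be the main obstacle, is \emph{necessity}: cospectrality for all $H_1,H_2$ only tells us the two bivariate polynomials agree at the pairs $(c_1,c_2)$ realizable as $(c_{H_1},c_{H_2})$. To force coefficient equality I would exhibit a family of realizable values rich enough to pin down the identity — for instance rooted paths $P_1,P_2,P_3,\dots$ attached at an endpoint, whose $c$-values are distinct rational functions of $x$; since $H_1,H_2$ are chosen independently, the realizable pairs contain a product set that is Zariski-dense (equivalently, supports an invertible Vandermonde system for generic $x$), forcing the coefficient polynomials in $x$ to coincide. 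Care is needed because $c_H$ is only a rational function of $x$ (with poles where $\phi(H\setminus v,x)=0$), so this density argument should be run for generic $x$ and then extended to a polynomial identity.
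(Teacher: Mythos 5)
This theorem is never proved in the paper: it is quoted verbatim from Butler et al.\ \cite{REU22} as background, to contrast the paper's sufficient (block similarity matrix) condition with a necessary-and-sufficient (polynomial) one, so there is no proof of record here to compare yours against. Judged on its own, your argument is correct. The chain Schwenk's coalescence formula $\Rightarrow$ the diagonal-perturbation identity $\phi(G\circ_u H,x)=\phi(H\setminus v,x)\det\bigl(xI-A_G+c_H E_{uu}\bigr)$ $\Rightarrow$ induction over attachment vertices (quantified over all base graphs, so that deleting $u$ keeps you inside the inductive hypothesis) $\Rightarrow$ multilinear expansion of $\det\bigl(xI-A_G+c_1D_1+c_2D_2\bigr)$ into principal minors indexed by $S\subseteq V_1$, $T\subseteq V_2$ is sound, the cancellation of the prefactor $a_{H_1}^{|V_1|}a_{H_2}^{|V_2|}$ uses exactly the common labeling hypothesis, and passing to complements to match the stated indexing is harmless for the reason you give. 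The necessity step is also fine, but you should state it over the field $\mathbb{R}(x)$ rather than ``for generic $x$'': the difference of the two determinants is a polynomial in $(c_1,c_2)$ of bidegree at most $(|V_1|,|V_2|)$ with coefficients in $\mathbb{R}[x]$, and endpoint-rooted paths give $c_{P_n}=-\phi(P_{n-2},x)/\phi(P_{n-1},x)$, which are pairwise distinct elements of $\mathbb{R}(x)$ (consecutive path polynomials are coprime by the three-term recurrence, so these are in lowest terms with distinct degrees); two successive Vandermonde/interpolation arguments over $\mathbb{R}(x)$, first in $c_1$ and then in $c_2$, kill every coefficient $\Delta_{k\ell}(x)$, with no genericity or Zariski-density language needed. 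Likewise, the pole issue you flag at the end dissolves once you note that every identity involving $c_H$ is an identity of rational functions in $x$ both of whose sides are polynomials, since $a_Hc_H=\phi(H,x)-x\,\phi(H\setminus v,x)$. It is also worth observing that your method is entirely disjoint from the machinery of this paper (block similarity matrices transported through Lemma~2.2-style shift structure), which is consistent with the paper's remark that its condition is sufficient but not necessary, while the polynomial condition you prove characterizes coalescence-cospectrality exactly.
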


In Figure~\ref{fig:necessary} we give two graphs on $7$ vertices for which Theorem~\ref{thm:necessary} can be used to establish $\displaystyle G_1\coal_{i=1}^2{}_{V_i}H_i$ and $\displaystyle G_2\coal_{i=1}^2{}_{V_i}H_i$ are cospectral for any choice of $H_1$ and $H_2$ with $V_1=\{1,2,3\}$ and $V_2=\{4,5,6,7\}$. However, Theorem~\ref{thm:Lq} does not hold as there is no block diagonal similarity matrix of the correct form (this can be established by setting up a series of equations such a similarity matrix would need to satisfy and showing the system is inconsistent). So there are limitations to the coalescing results presented in this paper; nevertheless, it does establish significantly many cases, and it is often easier to find a block similarity matrix than to check polynomial conditions.

\begin{figure}[htb]
\centering
\begin{tabular}{cc}
\PICAA & \PICAB \\
$G_1 =$ \verb!F@AMw!&$G_2 =$ \verb!F@AZg!
\end{tabular}
\caption{$\displaystyle G_1\coal_{i=1}^2{}_{V_i}H_i$ and $\displaystyle G_2\coal_{i=1}^2{}_{V_i}H_i$ are cospectral for the adjacency matrix any choice of $H_1$ and $H_2$ with $V_1=\{1,2,3\}$ and $V_2=\{4,5,6,7\}$.}
\label{fig:necessary}
\end{figure}

For Theorem~\ref{thm:D}, the authors found several examples for the distance matrix of graphs where experimentally $\displaystyle G_1\coal_{i=1}^\ell{}_{V_i}H_i$ and $\displaystyle G_2\coal_{i=1}^\ell{}_{V_i}H_i$ are distance-cospectral for arbitrary choice of $H_1,\ldots,H_\ell$, but no block similarity matrix satisfying the requirements of the theorem exists. One such example is shown in Figure~\ref{fig:Dnecessary}.

\begin{figure}[htb]
\centering
\begin{tabular}{cc}
\PICBA & \PICBB\\
$G_1 =$ \verb!GNKutO!&$G_2 =$ \verb!GB}XV_!
\end{tabular}
\caption{Experimentally, $\displaystyle G_1\coal_{i=1}^4{}_{V_i}H_i$ and $\displaystyle G_2\coal_{i=1}^4{}_{V_i}H_i$ are cospectral for the distance matrix for many choices of $H_1,$ $H_2, H_3, H_4$ with $V_1=\{1\}$, $V_2=\{2\}$, $V_3=\{3,4,5\}$ and $V_4=\{6,7,8\}$.}
\label{fig:Dnecessary}
\end{figure}

\begin{question}
For the graphs shown in Figure~\ref{fig:Dnecessary} are $\displaystyle G_1\coal_{i=1}^4{}_{V_i}H_i$ and $\displaystyle G_2\coal_{i=1}^4{}_{V_i}H_i$ cospectral for arbitrary choice of $H_1,H_2,H_3$,and $H_4$?
\end{question}

We note that for the graphs in Figure~\ref{fig:Dnecessary}, the following is a block diagonal similarity matrix for the distance matrix satisfying the conditions of Theorem~\ref{thm:D}, showing that $\displaystyle G_1\coal_{i=1}^3{}_{V_i}H_i$ and $\displaystyle G_2\coal_{i=1}^3{}_{V_i}H_i$ are cospectral for the distance matrix for arbitrary choice of $H_1,$ $H_2,$ and  $H_3$ with $V_1=\{1\}$, $V_2=\{2\}$, $V_3=\{3,4,5,6,7,8\}$. 
\[
S= \begin{pmatrix}1\end{pmatrix}\oplus\begin{pmatrix}1\end{pmatrix}\oplus\frac12\left(\begin{array}{rrrrrr}
0&1&1&-1&1&0\\
1&0&1&0&-1&1\\
1&1&0&1&0&-1\\
1&0&-1&0&1&1\\
-1&1&0&1&0&1\\
0&-1&1&1&1&0
\end{array}\right)
\]
This answers the previous question in the affirmative in the case where $H_3 = H_4$.

\begin{question}
Given $G_1$ and $G_2$, is there a necessary and sufficient condition to establish $\displaystyle G_1\coal_{i=1}^\ell{}_{V_i}H_i$ and $\displaystyle G_2\coal_{i=1}^\ell{}_{V_i}H_i$ are cospectral for the distance matrix for any choice of $H_1,\ldots,H_\ell$?
\end{question}

\subsubsection*{Alternative proofs of earlier results for coalescing on the distance matrix}
Coalescing for the distance matrix has previously appeared in the literature in two cases. We now show how Theorem~\ref{thm:D} can be used to establish those results.

McKay \cite{McKay} used coalescing on trees to show that almost all trees have a cospectral mate. Building off of Schwenk \cite{Schwenk} the key was finding a pair of cospectral trees for $\mathcal{D}$ which had a vertex for which coalescing an arbitrary tree at that vertex resulted in cospectral trees for $\mathcal{D}$ which with high probability were non-isomorphic. McKay's proof is based on characteristic polynomial arguments which relied heavily on the fact that a tree was involved.

\begin{theorem}[McKay \cite{McKay}]
For the graphs shown in Figure~\ref{fig:McKay}, $G_1\coal_1H$ is cospectral with $G_2\coal_1H$ with respect to the distance matrix for $H$ arbitrary and connected.
\end{theorem}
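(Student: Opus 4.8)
The plan is to deduce this as a special case of Theorem~\ref{thm:D}. The single-vertex coalescing $G_p\coal_1 H$ fits the general framework by taking the partition with $V_1=\{1\}$ (the coalescing vertex), choosing $H_1=H$, and grouping the remaining vertices into whatever further blocks arise, onto each of which we coalesce $K_1$ so that the rest of $G_p$ is left unchanged. Since $H$ is connected and the $G_p$ are connected, the coalesced distance matrices are well defined, and it therefore suffices to exhibit a block diagonal similarity matrix $S=B_1\oplus\cdots\oplus B_\ell$ for $\mathcal{D}_{G_1}$ and $\mathcal{D}_{G_2}$ whose induced similarity vertex partitioning isolates vertex $1$ in a singleton block $V_1=\{1\}$ and which satisfies $SJ=JS$. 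Given such an $S$, Theorem~\ref{thm:D} immediately yields that $G_1\coal_1 H$ and $G_2\coal_1 H$ are cospectral for the distance matrix for every connected $H$.

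Concretely, I would proceed in three steps. First, verify that $G_1$ and $G_2$ of Figure~\ref{fig:McKay} are cospectral for $\mathcal{D}$; this is McKay's starting point and can be read off directly from their distance matrices. Second, produce an explicit $S$ of the required form by solving, over block diagonal matrices whose first coordinate is isolated, the linear system $S\mathcal{D}_{G_1}=\mathcal{D}_{G_2}S$ together with the constraint $SJ=JS$, and then exhibiting an invertible solution. Since vertex $1$ sits in its own block, its block is a nonzero scalar and one naturally normalizes $B_1=(1)$, reducing the unknowns to the block (or blocks) on the remaining vertices. Third, invoke Theorem~\ref{thm:D}.

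The main obstacle is the second step: finding a single matrix that is simultaneously block diagonal with vertex $1$ isolated, a genuine similarity carrying $\mathcal{D}_{G_1}$ to $\mathcal{D}_{G_2}$, and commuting with $J$. The commuting condition $SJ=JS$ --- equivalent, for invertible $S$, to all row sums and all column sums of $S$ being one common constant --- is exactly the extra hurdle the distance matrix imposes beyond the $L^{(q)}$ case. A useful heuristic is that the rigid distance structure of trees tends to force any such $S$ to be close to a (possibly signed) permutation matrix matching vertices of equal transmission while fixing coordinate $1$; I expect the real work to be checking that the two particular trees in the figure admit a block diagonal $S$ meeting all three requirements, rather than any conceptual difficulty, after which the conclusion is immediate from Theorem~\ref{thm:D}.
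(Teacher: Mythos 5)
Your strategy coincides with the paper's: fit $G_p\coal_1H$ into the framework of Theorem~\ref{thm:D} by isolating vertex $1$ in a singleton block (coalescing $K_1$ on the remaining blocks), find a block diagonal similarity matrix $S$ for the two distance matrices with $SJ=JS$, and invoke the theorem. The problem is that the paper's proof consists \emph{entirely} of the step you leave undone: it exhibits the matrix explicitly, namely $S=(\,1\,)\oplus(\,1\,)\oplus\frac{1}{53}B$ for an explicit dense $14\times14$ integer matrix $B$ (the paper in fact isolates vertex $2$ as well). Your proposal only describes a search procedure --- solve $S\mathcal{D}_{G_1}=\mathcal{D}_{G_2}S$ together with $SJ=JS$ over block diagonal matrices and ``exhibit an invertible solution'' --- and then states the expectation that a solution exists. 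That expectation \emph{is} the theorem: existence of such an $S$ is exactly the nontrivial content, and it cannot be waved through, because it genuinely fails for other cospectral pairs. The paper itself shows this: Figure~\ref{fig:nine} lists distance-cospectral pairs admitting \emph{no} similarity matrix commuting with $J$ at all, and Figure~\ref{fig:Dnecessary} gives a pair where coalescing appears to work experimentally yet no block diagonal similarity matrix satisfying the hypotheses of Theorem~\ref{thm:D} exists. So until you actually produce $S$ (or prove its existence by some independent argument), the proof is an outline with its central verification missing.

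A secondary remark: your closing heuristic --- that the rigidity of trees should force $S$ to be close to a signed permutation fixing coordinate $1$ --- points in the wrong direction for this example. The matrix that works is dense with denominator $53$, nothing like a permutation. Indeed, since $G_1$ and $G_2$ are the same tree with different labelings (as the paper notes), a permutation similarity exists, but any permutation $S$ with vertex $1$ isolated would force the coalesced graphs to be isomorphic, which is not what happens here; the whole point is that the required block structure rules out the ``obvious'' similarity and a non-obvious one must be found.
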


\begin{figure}[htb]
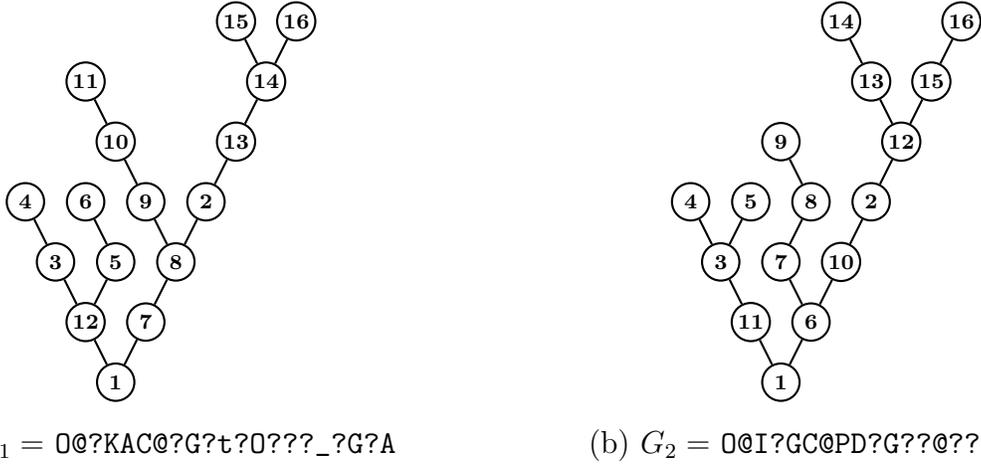
\centering
\begin{tabular}{c@{\hspace{1in}}c}
\PICCA & \PICCB \\[5pt]
(a) $G_1 =$ \verb'O@?KAC@?G?t?O???_?G?A'&
(b) $G_2 =$ \verb'O@I?GC@PD?G??@??_?_?@'
\end{tabular}
\caption{Graphs from McKay \cite{McKay}}
\label{fig:McKay}
\end{figure}

\begin{proof}
The graphs shown in Figure~\ref{fig:McKay} have the following similarity matrix for the distance matrices which satisfies the conditions of Theorem~\ref{thm:D}.
\[\scriptsize
(\,1\,)\oplus(\,1\,)\oplus\frac1{53}
\left(\begin{array}{rrrrrrrrrrrrrr}
20 & 7 & 7 & -15 & 16 & 1 & 3 & 2 & 15 & -2 & -6 & 0 & 19 & -14 \\
7 & 21 & 21 & 8 & -5 & 3 & 9 & 6 & -8 & -6 & 0 & 0 & -14 & 11 \\
20 & 7 & 7 & -15 & 16 & 1 & 3 & 2 & 15 & -2 & 19 & -14 & -6 & 0 \\
7 & 21 & 21 & 8 & -5 & 3 & 9 & 6 & -8 & -6 & -14 & 11 & 0 & 0 \\
-2 & -6 & -6 & 28 & 9 & -16 & 5 & 21 & 25 & -21 & 2 & 6 & 2 & 6 \\
15 & -8 & -8 & 2 & 12 & 14 & -11 & 28 & -2 & 25 & -15 & 8 & -15 & 8 \\
-16 & 5 & 5 & 12 & 19 & 31 & -13 & 9 & -12 & -9 & 16 & -5 & 16 & -5 \\
-1 & -3 & -3 & 14 & 31 & -8 & 29 & -16 & -14 & 16 & 1 & 3 & 1 & 3 \\
-3 & -9 & -9 & -11 & -13 & 29 & 34 & 5 & 11 & -5 & 3 & 9 & 3 & 9 \\
2 & 6 & 6 & 25 & -9 & 16 & -5 & -21 & 28 & 21 & -2 & -6 & -2 & -6 \\
-15 & 8 & 8 & -2 & -12 & -14 & 11 & 25 & 2 & 28 & 15 & -8 & 15 & -8 \\
33 & -7 & -7 & 15 & -16 & -1 & -3 & -2 & -15 & 2 & 20 & 7 & 20 & 7 \\
-7 & 0 & 11 & -8 & 5 & -3 & -9 & -6 & 8 & 6 & 7 & 21 & 7 & 21 \\
-7 & 11 & 0 & -8 & 5 & -3 & -9 & -6 & 8 & 6 & 7 & 21 & 7 & 21
\end{array}\right)
\]
So $G_1\coal_1H$ is cospectral with $G_2\coal_1H$ with respect to the distance matrix for $H$ arbitrary and connected.
\end{proof}

We note in passing that the graphs $G_1$ and $G_2$ in Figure~\ref{fig:McKay} are the same graph with different labelings.

Heysse \cite{Heysse} used coalescing to show that there exist pairs of cospectral graphs with arbitrarily many different numbers of edges. The key was finding a pair of cospectral graphs for $\mathcal{D}$ which had a vertex for which coalescing an arbitrary graph resulted in a cospectral graph. Heysse's proof is based on eigenvector arguments.

\begin{theorem}[{Heysse \cite{Heysse}}]
For the graphs shown in Figure~\ref{fig:Heysse}, $G_1\coal_1H$ is cospectral with $G_2\coal_1H$ with respect to the distance matrix  for $H$ arbitrary and connected.
\end{theorem}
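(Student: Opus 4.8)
The plan is to mirror exactly the strategy used for McKay's theorem above: reduce $G_p\coal_1H$ to the general coalescing framework, exhibit a single block diagonal similarity matrix for the two distance matrices that meets the hypotheses of Theorem~\ref{thm:D}, and then invoke that theorem. First I would rewrite the single-vertex coalescing in the language of Section~\ref{sec:coalescing}: coalescing a connected rooted graph onto the lone vertex $1$ is the special case of $\displaystyle G_p\coal_{i=1}^2{}_{V_i}H_i$ with the partition $V_1=\{1\}$ and $V_2=V(G_p)\setminus\{1\}$ and with $H_1=H$, $H_2=K_1$ (recall that coalescing $K_1$ leaves the graph unchanged). With this reformulation, Theorem~\ref{thm:D} applies once a suitable similarity matrix is produced.

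Second, following the template of the preceding proof, the goal is to produce a block diagonal similarity matrix $S=B_1\oplus B_2$ for $\mathcal{D}_{G_1}$ and $\mathcal{D}_{G_2}$ that is compatible with this partition — so that $B_1$ is a $1\times1$ block indexed by vertex $1$ and $B_2$ is the block indexed by the remaining vertices — and that additionally satisfies $SJ=JS$. Given such an $S$, Theorem~\ref{thm:D} immediately yields that $G_1\coal_1H$ and $G_2\coal_1H$ are cospectral for the distance matrix for every connected $H$, completing the proof.

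Third, to construct $S$ concretely I would read off the two distance matrices $\mathcal{D}_{G_1}$ and $\mathcal{D}_{G_2}$ directly from Figure~\ref{fig:Heysse} and set up the simultaneous linear system $\mathcal{D}_{G_2}S=S\mathcal{D}_{G_1}$ together with $SJ=JS$ in the unknown entries of $B_1$ and $B_2$, then solve it and record an invertible solution of the prescribed block shape. Since $G_1$ and $G_2$ are distance-cospectral, some similarity matrix certainly exists; the content of the computation is that one can be chosen with the rigid block structure and the commutation $SJ=JS$ demanded by Theorem~\ref{thm:D}.

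The step I expect to be the genuine obstacle is precisely this last existence question. As the discussion around Figures~\ref{fig:necessary} and \ref{fig:Dnecessary} shows, a block diagonal similarity matrix of the required form satisfying $SJ=JS$ need not exist even when coalescing does preserve cospectrality, so its existence here is a nontrivial feature of this particular pair rather than a formality. Once such a matrix is exhibited, however, verifying $S\mathcal{D}_{G_1}=\mathcal{D}_{G_2}S$ and $SJ=JS$ reduces to a routine finite matrix multiplication, exactly as in the McKay proof.
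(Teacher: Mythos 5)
Your proposal is correct and takes essentially the same approach as the paper: the paper's proof consists precisely of exhibiting a block-diagonal similarity matrix $(\,1\,)\oplus(\,1\,)\oplus\tfrac17 B$ (with $B$ an explicit $8\times 8$ block) for the two distance matrices satisfying the hypotheses of Theorem~\ref{thm:D}, i.e.\ it carries out the finite computation you defer to the last step. The only difference is cosmetic: the paper's matrix has a finer block structure than the two-block form $(\,1\,)\oplus B_2$ you ask for (the finer structure implies the coarser one, and as the paper notes it additionally shows vertex $2$ could serve as the coalescing vertex).
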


\begin{figure}[htb]
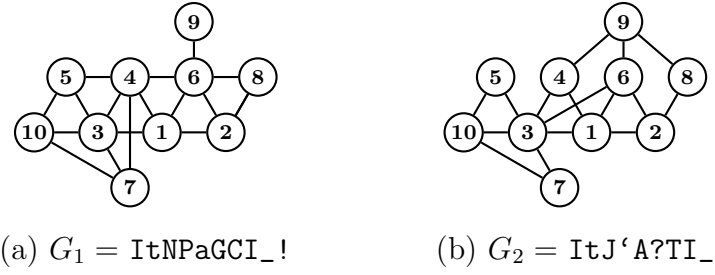
\centering
\begin{tabular}{c@{\hspace{0.75in}}c}
\PICDA & \PICDB \\[5pt]
(a) $G_1=$ \verb'ItNPaGCI_!'&
(b) $G_2=$ \verb'ItJ`A?TI_'
\end{tabular}
\caption{Graphs from Heysse \cite{Heysse}}
\label{fig:Heysse}
\end{figure}
\begin{proof}
The graphs shown in Figure~\ref{fig:Heysse} have the following similarity matrix for the distance matrices which satisfies the conditions of Theorem~\ref{thm:D}.
\[\small
(\,1\,)\oplus(\,1\,)\oplus\frac17
\left(\begin{array}{rrrrrrrr}
5 & 1 & 2 & 1 & 2 & 1 & -3 & -2 \\
-1 & 4 & 1 & 4 & 1 & -3 & 2 & -1 \\
2 & -1 & 1 & -1 & 2 & -1 & 3 & 2 \\
3 & 2 & -3 & 2 & -3 & 2 & 1 & 3 \\
2 & -1 & 2 & -1 & 1 & -1 & 3 & 2 \\
-1 & -3 & 1 & 4 & 1 & 4 & 2 & -1 \\
-1 & 4 & 1 & -3 & 1 & 4 & 2 & -1 \\
-2 & 1 & 2 & 1 & 2 & 1 & -3 & 5
\end{array}\right)
\]
So $G_1\coal_1H$ is cospectral with $G_2\coal_1H$ with respect to the distance matrix for $H$ arbitrary and connected.
\end{proof}

We note in passing that this proof shows that we could have also used vertex $2$ as the vertex to coalesce.

\subsubsection*{Small examples for  Theorem~\ref{thm:D} and Theorem~\ref{thm:genD}}

In Figure~\ref{fig:seven} are all eleven nonisomorphic pairs of graphs which are cospectral for $\mathcal{D}$ on seven vertices; there are no nonisomorphic graphs which are cospectral for $\mathcal{D}$ on six or fewer vertices. All pairs except (\verb'FqyWo', \verb'Ft@]o') are also cospectral for $\mathcal{D}^f$ for $f$ arbitrary.

\begin{figure}[htb]
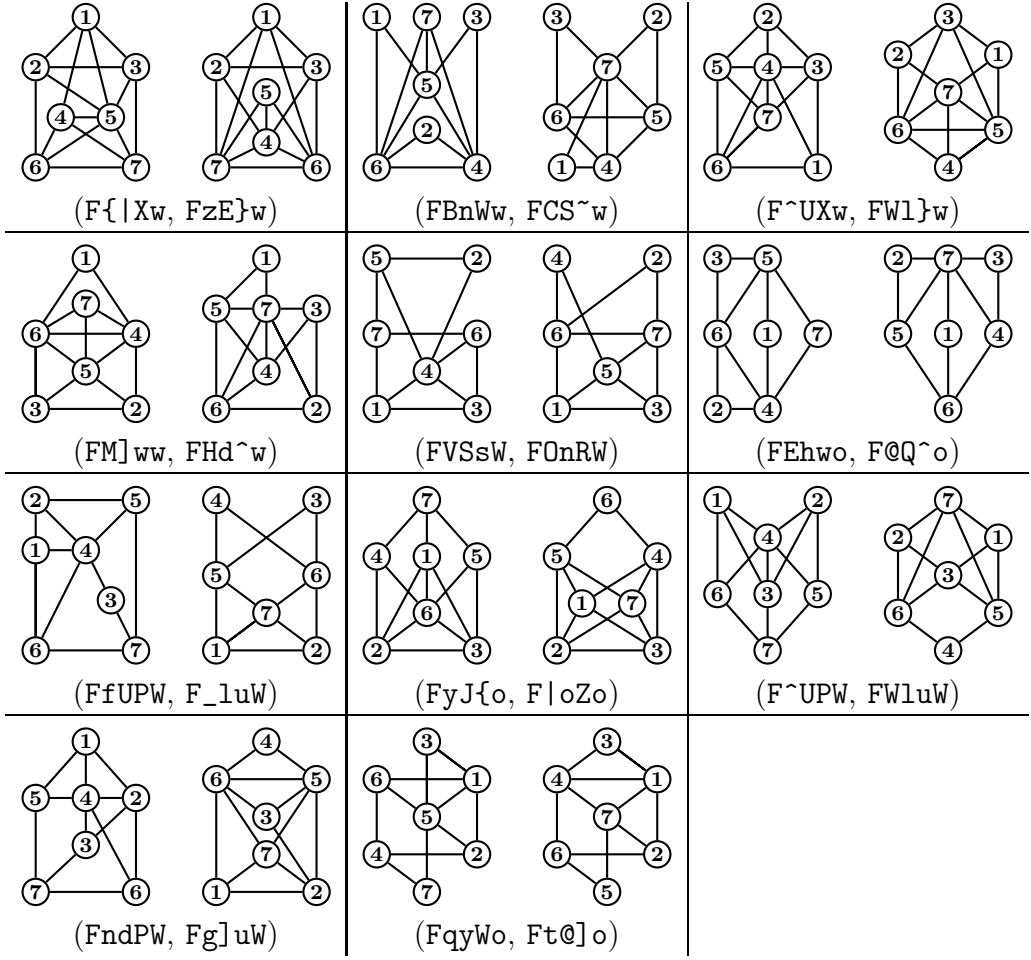

\centering
\begin{tabular}{c|c|c}
\PICSEVENA \\ 
(\verb'F{|Xw', \verb'FzE}w') & (\verb'FBnWw', \verb'FCS~w') & (\verb'F^UXw', \verb'FWl}w') \\ \hline  &&\\[-10pt]
\PICSEVENB \\
(\verb'FM]ww', \verb'FHd^w') & (\verb'FVSsW', \verb'FOnRW') & (\verb'FEhwo', \verb'F@Q^o') \\ \hline &&\\[-10pt]
\PICSEVENC \\ 
(\verb'FfUPW', \verb'F_luW') & (\verb'FyJ{o', \verb'F|oZo') & (\verb'F^UPW', \verb'FWluW') \\ \hline  &&\\[-10pt]
\PICSEVEND \\ 
(\verb'FndPW', \verb'Fg]uW') & (\verb'FqyWo', \verb'Ft@]o') & 
\end{tabular}
\cprotect\caption{All nonisomorphic graphs which are cospectral for $\mathcal{D}$ on seven vertices. All pairs \emph{except} (\verb'FqyWo', \verb'Ft@]o') are also cospectral for $\mathcal{D}^f$ for $f$ arbitrary.}
\label{fig:seven}
\end{figure}

For all of these graphs, the similarity matrix is
\[
S=
(\,1\,)\oplus(\,1\,)\oplus(\,1\,)\oplus\frac12
\left(\begin{array}{rrrrrrrr}
 -1 & 1 & 1 & 1 \\
 1 & -1 & 1 & 1 \\
 1 & 1 & -1 & 1 \\
 1 & 1 & 1 & -1
\end{array}\right),
\]
which means that for each pair we can glue an arbitrary $H_1$ onto vertex $1$, an arbitrary $H_2$ onto vertex $2$, an arbitrary $H_3$ onto vertex $3$, and an arbitrary $H_4$ onto the vertices $\{4,5,6,7\}$ in both graphs and the resulting pair of graphs will be cospectral for $\mathcal{D}$ and, except the final pair, $\mathcal{D}^f$ for $f$ arbitrary.

\subsubsection*{Assumption that $SJ=JS$ in Theorem~\ref{thm:D}}
From Theorem~\ref{thm:Lq} we can derive the following result for cospectral graphs with respect to $L^{(q)}$.

\begin{proposition}
If $G_1$ and $G_2$ are cospectral with respect to $L^{(q)}$, then $G_1\coal_VH$ is cospectral with $G_2\coal_VH$ for any $H$ with respect to $L^{(q)}$.
\end{proposition}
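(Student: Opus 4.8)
The plan is to realize this as the special case of Theorem~\ref{thm:Lq} in which the vertex partition consists of a single block. The only point requiring genuine justification is that cospectrality with respect to $L^{(q)}$ automatically supplies a similarity matrix of the required (trivially block diagonal) form; everything else is an immediate appeal to Theorem~\ref{thm:Lq}.

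First I would observe that $L^{(q)}_{G_1}$ and $L^{(q)}_{G_2}$ are real symmetric matrices, hence each is orthogonally diagonalizable. Since $G_1$ and $G_2$ are cospectral for $L^{(q)}$, these two matrices have the same eigenvalues with the same multiplicities, so they are orthogonally similar: there is an invertible (indeed orthogonal) matrix $S$ with $S L^{(q)}_{G_1} S^{-1}=L^{(q)}_{G_2}$. Thus $S$ is a similarity matrix for the $L^{(q)}$ matrices of $G_1$ and $G_2$. Next I would regard $S$ as a block diagonal matrix with a single block, that is, take $\ell=1$ and $S=B_1$ in the language of Theorem~\ref{thm:Lq}. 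The corresponding similarity vertex partitioning is then the trivial partition $V_1=V$, where $V$ is the entire vertex set. Applying Theorem~\ref{thm:Lq} with this partition and the single rooted graph $H_1=H$ shows that $G_1\coal_{V_1}H_1$ and $G_2\coal_{V_1}H_1$ are cospectral for $L^{(q)}$; but these graphs are precisely $G_1\coal_V H$ and $G_2\coal_V H$, which is the desired conclusion.

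I do not expect a real obstacle here, since the substantive work is already packaged inside Theorem~\ref{thm:Lq}. The one step that deserves care is the passage from \emph{cospectral} to \emph{similar}: cospectral matrices need not be similar in general, but symmetric ones are, because both are diagonalizable with identical spectra. This use of the symmetry of $L^{(q)}$ is exactly what legitimizes taking the single block $B_1=S$ as a similarity matrix, and it is what lets us invoke Theorem~\ref{thm:Lq} without constructing the more delicate block diagonal similarity matrices needed for the examples elsewhere in the paper.
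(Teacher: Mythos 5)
Your proposal is correct and takes essentially the same route as the paper: both treat the similarity matrix guaranteed by cospectrality as a single block ($\ell=1$, $V_1=V$) and then invoke Theorem~\ref{thm:Lq}. The only difference is that you spell out the step the paper leaves implicit --- that cospectral real symmetric matrices are genuinely similar (being diagonalizable with equal spectra) --- which is a worthwhile clarification but not a different argument.
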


In other words, given two cospectral graphs if we coalesce the same graph onto each vertex the resulting graphs are cospectral.

\begin{proof}
Since they are cospectral there is a similarity matrix and we can treat the matrix as consisting of a single block which corresponds with all vertices. Now apply Theorem~\ref{thm:Lq}.
\end{proof}

One natural question is to ask if this can be extended to other matrices, and in particular the distance matrix. Small cases (e.g.\ up through eight) seem to indicate the answer is ``yes''. However, we can establish the following.

\begin{proposition}
If $G_1$ and $G_2$ are cospectral with respect to the distance matrix and there exists a similarity matrix $S$ for these distance matrices for which $SJ=JS$, then $G_1\coal_VH$ is cospectral with $G_2\coal_VH$ for any $H$ with respect to the distance matrix.
\end{proposition}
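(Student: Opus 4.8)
The plan is to obtain this as the single-block special case of Theorem~\ref{thm:D}, mirroring exactly the argument used to derive the corresponding $L^{(q)}$ proposition from Theorem~\ref{thm:Lq}. Coalescing the same rooted graph $H$ onto every vertex of $G_1$ and $G_2$ is precisely the operation $G_p\coal_{i=1}^\ell{}_{V_i}H_i$ in the degenerate case $\ell=1$, $V_1=V$, and $H_1=H$, so the whole machinery of Theorem~\ref{thm:D} applies once we package the given similarity matrix in the required block-diagonal form.

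First I would take the hypothesized similarity matrix $S$ and regard it as block diagonal with a single block, that is, set $\ell=1$ and $B_1=S$. Trivially $S=B_1$ is block diagonal in this one-block sense, and it is invertible since it is a similarity matrix. The associated similarity vertex partitioning then consists of the single part $V_1=V$ containing all the vertices, which is exactly the partition corresponding to coalescing onto every vertex.

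Next I would verify that the two hypotheses of Theorem~\ref{thm:D} are met: the graphs $G_1$ and $G_2$ are cospectral for $\mathcal{D}$ by assumption, and the commuting condition $SJ=JS$ is supplied directly in the statement of the proposition. This is the only place the $SJ=JS$ hypothesis enters, and it is exactly the feature distinguishing this argument from the $L^{(q)}$ proposition, where no such condition is needed. With both hypotheses in hand, applying Theorem~\ref{thm:D} with $H_1=H$ yields that $G_1\coal_{i=1}^1{}_{V_1}H_1=G_1\coal_VH$ is cospectral with $G_2\coal_VH$ for the distance matrix, which is the claim.

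Because this is a direct specialization, I do not anticipate a genuine obstacle; the only point worth flagging is that Theorem~\ref{thm:D} genuinely requires the similarity matrix to commute with $J$, so \emph{unlike} the $L^{(q)}$ case one cannot invoke cospectrality alone. The existence of a $J$-commuting similarity matrix must therefore be assumed as a hypothesis rather than derived from cospectrality, which is precisely why that condition appears explicitly in the statement of the proposition.
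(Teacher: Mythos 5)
Your proposal is correct and matches the paper's own proof exactly: the paper likewise views the hypothesized similarity matrix $S$ as a single block corresponding to all vertices and then invokes Theorem~\ref{thm:D} together with the assumption $SJ=JS$. Your additional remark about why the $J$-commuting hypothesis cannot be dropped (unlike in the $L^{(q)}$ case) is also consistent with the paper's surrounding discussion.
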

\begin{proof}
Since they are cospectral there is a similarity matrix and we can treat the matrix as consisting of a single block which corresponds with all vertices. Now apply Theorem~\ref{thm:D} combined with the assumption that $SJ=JS$.
\end{proof}

The assumption for $SJ=JS$ cannot be dropped as there do exist cospectral graphs with respect to the distance matrix with no such similarity matrices. For $n=9$ vertices there are $8$ such pairs out of the $14597$ pairs of cospectral graphs. These $8$ pairs are shown in Figure~\ref{fig:nine} and for these pairs there are \emph{no} subsets of vertices on which we can arbitrarily coalesce and maintain cospectrality for $\mathcal{D}$. For $n=10$ vertices there are $38$ such pairs out of the $875864$ pairs of cospectral graphs. This would seem to indicate that such pairs of cospectral graphs which do not have a similarity matrix that commutes with $J$ are relatively rare, which leads to the following questions.

\begin{question}
Do almost all pairs of cospectral graphs for the distance matrix have a similarity matrix $S$ for which $SJ=JS$?
\end{question}

\begin{question}
Are there constructions to form infinitely large families of pairs of cospectral graphs for the distance matrix that have no similarity matrix $S$ for which $SJ=JS$?
\end{question}

This last question might be particularly challenging as a common approach to forming large families of cospectral pairs is to start with small examples and then coalesce, but that is precisely what cannot happen for these graphs.

\begin{figure}[htb]
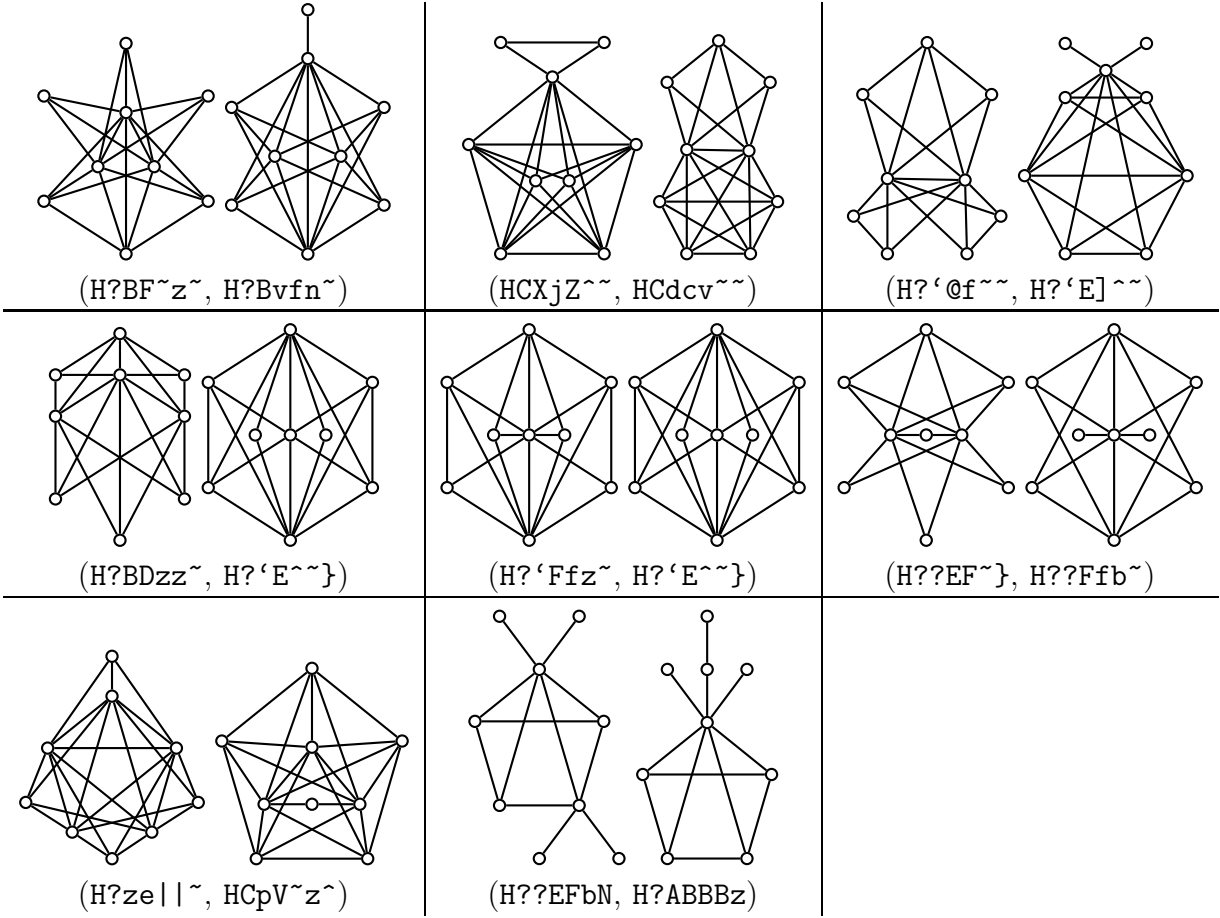

\centering
\begin{tabular}{c|c|c}
\PICNINEA \\ 
(\verb'H?BF~z~', \verb'H?Bvfn~') & (\verb'HCXjZ^~', \verb'HCdcv~~') & (\verb'H?`@f~~', \verb'H?`E]^~') \\ \hline & & \\[-10pt]
   \PICNINEB \\ 
(\verb'H?BDzz~', \verb'H?`E^~}') & (\verb'H?`Ffz~', \verb'H?`E^~}') & (\verb'H??EF~}', \verb'H??Ffb~') \\ \hline & & \\[-10pt]
\PICNINEC
\\ 
(\verb'H?ze||~', \verb'HCpV~z^') & (\verb'H??EFbN', \verb'H?ABBBz') & \\
\end{tabular}
\caption{All pairs of cospectral graphs on nine vertices with respect to the distance matrix which have no similarity matrix $S$ satisfying $SJ=JS$}
\label{fig:nine}
\end{figure}

\subsubsection*{Coalescing sets and unions for $\mathcal{D}$}
Sometimes no single block symmetric matrix captures all possible ways to coalesce graphs onto cospectral pairs. Among other things this shows that operations, such as unions, might not preserve the ability to coalesce. 

As an example, for the graphs shown in Figure~\ref{fig:unions} both of the following are similarity matrices for the distance matrices of the graphs.
\begin{align*}
S&\small=(\,1\,)\oplus(\,1\,)\oplus(\,1\,)\oplus(\,1\,)\oplus\frac12
\left(\begin{array}{rrrrrrrr}
 -1 & 1 & 1 & 1 \\
 1 & -1 & 1 & 1 \\
 1 & 1 & -1 & 1 \\
 1 & 1 & 1 & -1
\end{array}\right)
\\[5pt]
S&\small=
\frac12
\left(\begin{array}{rrrrrrrr}
-1 & 1 & 1 & 1 & 0 & 0 & 0 \\
1 & 1 & 0 & 0 & 0 & 1 & -1 \\
1 & 0 & 0 & 1 & 1 & -1 & 0 \\
1 & 0 & 1 & 0 & -1 & 0 & 1 \\
0 & 0 & -1 & 1 & 0 & 1 & 1 \\
0 & -1 & 1 & 0 & 1 & 1 & 0 \\
0 & 1 & 0 & -1 & 1 & 0 & 1
\end{array}\right)
\oplus(\,1\,)
\end{align*}
Between these two similarity matrices and Theorem~\ref{thm:D} this accounts for all ways to coalesce onto subsets of vertices and maintain cospectrality. In particular, we have that $G_1\coal_1H$ and $G_2\coal_1H$ are always cospectral for the distance matrix for arbitrary $H$; $G_1\coal_8H$ and $G_2\coal_8H$ are always cospectral for the distance matrix for arbitrary $H$; but $G_1\coal_{\{1,8\}}H$ and $G_2\coal_{\{1,8\}}H$ are not always cospectral for the distance matrix.

\begin{figure}[htb]
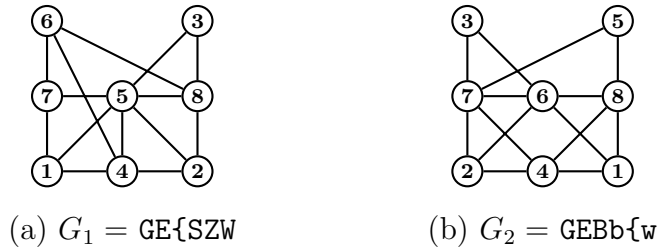
\centering
\begin{tabular}{c@{\hspace{1in}}c}
\PICE
\\[5pt]
(a) $G_1=$ \verb'GE{SZW'&(b) $G_2=$ \verb'GEBb{w'
\end{tabular}
\caption{Graphs which have two similarity matrices for the distance matrix with different block structures}
\label{fig:unions}
\end{figure}

\subsubsection*{Forming large cospectral graphs without cut vertices for the adjacency matrix}
Coalescing can be a tool to form large cospectral graphs, but the results always have cut vertices where the coalescing occurred. For the adjacency matrix we can get around this by using coalescing with the generalized distance matrix, $\mathcal{D}^f$. 

The idea is to use functions $f:0,1,\ldots \mapsto\{0,1\}$. In other words, we can take pairs of cospectral graphs for $\mathcal{D}^f$ and then add in edges for vertices which are given distances apart. The results will always be cospectral. As an example, in Figure~\ref{fig:genD} we have an example of two cospectral graphs and their decompositions (in the notation of Theorem~\ref{thm:genD}. The corresponding unions of any subset of these graphs are cospectral. The common similarity matrix for these graphs is
\[
S\small=(\,1\,)\oplus(\,1\,)\oplus(\,1\,)\oplus(\,1\,)\oplus(\,1\,)\oplus(\,1\,)\oplus(\,1\,)\oplus\frac12
\left(\begin{array}{rrrrrrrr}
 1 & 1 & -1 & 1 \\
 1 & -1 & 1 & 1 \\
 -1 & 1 & 1 & 1 \\
 1 & 1 & 1 & -1
\end{array}\right)
\]

\begin{figure}[htb]
\centering
\PICF
\cprotect\caption{Distance decompositions for $G_1=$ \verb'JCO_?c]@_S?' and $G_2=$ \verb'JCO_?sAB_k?'}
\label{fig:genD}
\end{figure}

In general, start with a pair of graphs that are cospectral for $\mathcal{D}^f$, then build large graphs which are cospectral for $\mathcal{D}^f$ by coalescing. Finally, take unions of various distances to eliminate cut vertices. We also note that for any such resulting graphs that the complements will also be cospectral (by taking the union of the complements of the distances).

\subsection*{Acknowledgment} This research was conducted primarily at the 2023 Iowa State University Math REU which was supported through NSF Grant DMS-1950583.

\bibliographystyle{plain}
\bibliography{bibliography}

\begin{thebibliography}{10}

\bibitem{distancesurvey}
Mustapha Aouchiche and Pierre Hansen.
\newblock Distance spectra of graphs: a survey.
\newblock {\em Linear Algebra Appl.}, 458:301--386, 2014.

\bibitem{Aouchiche}
Mustapha Aouchiche and Pierre Hansen.
\newblock Cospectrality of graphs with respect to distance matrices.
\newblock {\em Appl. Math. Comput.}, 325:309--321, 2018.

\bibitem{MR2242465}
R.~B. Bapat, A.~K. Lal, and Sukanta Pati.
\newblock A {$q$}-analogue of the distance matrix of a tree.
\newblock {\em Linear Algebra Appl.}, 416(2-3):799--814, 2006.

\bibitem{Bapat}
R.~B. Bapat and Sivaramakrishnan Sivasubramanian.
\newblock Squared distance matrix of a tree: inverse and inertia.
\newblock {\em Linear Algebra Appl.}, 491:328--342, 2016.

\bibitem{REU19}
Steve Butler, Elizabeth Cooper, Aaron Li, Kate Lorenzen, and Zo\"{e} Schopick.
\newblock Spectral properties of the exponential distance matrix.
\newblock {\em Involve}, 15(5):739--762, 2022.

\bibitem{REU22}
Steve Butler, Elena D'Avanzo, Rachel Heikkinen, Joel Jeffries, Alyssa Kruczek,
  and Harper Niergarth.
\newblock Complements of coalescing sets, 2022.

\bibitem{guo}
Ji-Ming Guo, Jianxi Li, and Wai~Chee Shiu.
\newblock On the {L}aplacian, signless {L}aplacian and normalized {L}aplacian
  characteristic polynomials of a graph.
\newblock {\em Czechoslovak Math. J.}, 63(138)(3):701--720, 2013.

\bibitem{MR2070541}
Willem~H. Haemers and Edward Spence.
\newblock Enumeration of cospectral graphs.
\newblock {\em European J. Combin.}, 25(2):199--211, 2004.

\bibitem{Heysse}
Kristin Heysse.
\newblock A construction of distance cospectral graphs.
\newblock {\em Linear Algebra Appl.}, 535:195--212, 2017.

\bibitem{Survey2}
Huiqiu Lin, Jinlong Shu, Jie Xue, and Yuke Zhang.
\newblock A survey on distance spectra of graphs.
\newblock {\em Adv. Math. (China)}, 50(1):29--76, 2021.

\bibitem{McKay}
Brendan~D. McKay.
\newblock On the spectral characterisation of trees.
\newblock {\em Ars Combin.}, 3:219--232, 1977.

\bibitem{Schwenk}
Allen~J. Schwenk.
\newblock Almost all trees are cospectral.
\newblock In {\em New directions in the theory of graphs ({P}roc. {T}hird {A}nn
  {A}rbor {C}onf., {U}niv. {M}ichigan, {A}nn {A}rbor, {M}ich., 1971)}, pages
  275--307. Academic Press, New York, 1973.

\bibitem{MR2022290}
Edwin~R. van Dam and Willem~H. Haemers.
\newblock Which graphs are determined by their spectrum?
\newblock {\em Linear Algebra Appl.}, 373:241--272, 2003.

\bibitem{MR2499010}
Edwin~R. van Dam and Willem~H. Haemers.
\newblock Developments on spectral characterizations of graphs.
\newblock {\em Discrete Math.}, 309(3):576--586, 2009.

\end{thebibliography}
\end{document}